\documentclass[12pt,a4paper,reqno]{amsart}
\textwidth=6in
\textheight=9in
\hoffset=-0.375in
\voffset=-0.75in

\usepackage{graphicx}
\usepackage{yhmath}
\usepackage{amsmath,verbatim,enumerate}
\usepackage[dvipsnames]{xcolor}
\usepackage{amssymb}
\usepackage{pgf,tikz}
\usetikzlibrary{arrows}
\usepackage[utf8]{inputenc}
\usepackage{subfig}
\usepackage{hyperref}

\definecolor{Dag}{RGB}{20, 100, 20}

\theoremstyle{plain}
\newtheorem{thm}{Theorem}[section]
\newtheorem{lemma}[thm]{Lemma}

\newtoks\prt
\newtheorem{proclaim}[thm]{\the\prt}
\theoremstyle{definition}

\newtheorem{remark}[thm]{Remark}

\newtheorem{definition}[thm]{Definition}

\def\eqn#1$$#2$${\begin{equation}\label#1#2\end{equation}}

\numberwithin{equation}{section}

\headheight=12pt

\def\bal{\begin{aligned}}
	\def\eal{\end{aligned}}
\def\u#1{\hbox{\boldmath $#1$}}
\def\B{\mathcal{B}}
\def\G{\mathcal{G}}

\def\diam{\operatorname{diam}}
\def\dist{\operatorname{dist}}
\def\card{\operatorname{card}}
\def\loc{\operatorname{loc}}
\def\epsilon{\varepsilon}

\def\en{\mathbb N}
\def\er{\mathbb R}

\def\H{\mathcal{H}}
\def\oint{-\hskip -13pt \int}

\def\tO{\u{O}}
\def\tP{\u{P}}

\def\Q{\mathcal{Q}}

\def\r2{\er^2}

\def\sgn{\operatorname{sgn}}

\def\co{\operatorname{co}}

\def\rn{\mathbb R^n}

\def\sgn{\operatorname{sgn}}

\def\Z{\mathcal Z}
\def\N{\mathcal N}

\newcommand{\dx}{\,\textup{d}}

\definecolor{ariwarn}{RGB}{100, 0, 190}

\makeatletter
\newcommand{\labeltext}[2]{%
	\@bsphack
	\def\@currentlabel{#1}{\label{#2}}%
	\@esphack
}
\makeatother

\def\step#1#2#3{\par \noindent{{\vskip 5pt \bf Step~\labeltext{#1}{#3}#1. }{\bf #2. }}}

\newtoks\by
\newtoks\paper
\newtoks\book
\newtoks\jour
\newtoks\yr
\newtoks\pages
\newtoks\vol
\newtoks\publ
\def\ota{{\hbox\vol{???}}}
\def\cLear{\by=\ota\paper=\ota\book=\ota\jour=\ota\yr=\ota
	\pages=\ota\vol=\ota\publ=\ota}
\def\endpaper{\the\by, {\the\paper},
	\textit{\the\jour} \textbf{\the\vol} (\the\yr), \the\pages.\cLear}
\def\endbook{\the\by, \textit{\the\book}, \the\publ.\cLear}
\def\endprep{\the\by, \textit{\the\paper}, \the\jour.\cLear}
\def\endyearprep{\the\by, \textit{\the\paper}, \the\jour, (\the\yr).\cLear}
\def\name#1#2{#2 #1}
\def\nom{ \rm no. }

\title[Diffeomorphic approximation of Planar Sobolev Homeomorphisms]{Diffeomorphic approximation of Planar Sobolev Homeomorphisms in rearrangement invariant spaces}
\author[D.~Campbell]{Daniel Campbell}
\address{D.~Campbell: Department of Mathematics, University of Hradec Kr\' alov\' e, Rokitansk\'eho 62, 500 03 Hradec Kr\'alov\'e, Czech Republic}
\address{Faculty of Economics, University of South Bohemia, Studentsk\' a 13, Cesk\' e Budejovice, Czech Republic}
\email{\tt daniel.campbell@uhk.cz}
\subjclass[2000]{46E35}
\author[L.~Greco]{Luigi Greco}
\address{L.~Greco: Dipartimento di Ingegneria Elettrica e delle Tecnologie dell'In\-for\-ma\-zio\-ne, Universit\`a degli Studi di Napoli ``Federico II", Via Claudio~21, 80125 Napoli, Italy}
\email{luigreco@unina.it}
\author[R.~Schiattarella]{Roberta Schiattarella}
\address{R.~Schiattarella: Dipartimento di Matematica e Applicazioni ``R. Caccioppoli",
	Universit\`a degli Studi di Napoli  ``Federico II",
	Via Cintia, 80126 Napoli, Italy}
\email{roberta.schiattarella@unina.it}
\author[F.~Soudsk\' y]{Filip Soudsk\' y}
\address{F. Soudsk\' y: Department of Mathematics and Didactic of Mathematics, Faculty of Science, Humanities and Education, TECHNICAL UNIVERSITY OF LIBEREC, Studentsk\' a 1402/2}
\email{\tt filip.soudsky@tul.cz}
\address{Department of Mathematics, University of Hradec Kr\' alov\' e, Rokitansk\'eho 62, 500 03 Hradec Kr\'alov\'e, Czech Republic}
\email{filip.soudsky@uhk.cz}

\thanks{The first and fourth authors were supported by the grant GACR 20-19018Y. L.G. and R.S. are members of Gruppo Nazionale per l'Analisi Matematica, la Probabilit\`a e le loro Applicazioni (GNAMPA) of INdAM. The research of R.S. has been funded by PRIN Project 2017JFFHSH}

\begin{document}
	
	\begin{abstract}
		Let $\Omega\subseteq\er^2$ be a domain, let $X$ be a rearrangement invariant space and let $f\in W^{1}X(\Omega,\er^2)$ be a homeomorphism between $\Omega$ and $f(\Omega)$. Then there exists a sequence of diffeomorphisms $f_k$ converging to $f$ in the space $W^{1}X(\Omega,\er^2)$.
	\end{abstract}

	\maketitle
	
	\section{Introduction and main results}
	Recently, motivated by applications in non--linear elasticity and in geometric function theory, a great deal has been devoted in understanding the question of approximating homeomorphisms $f\colon \Omega\subset \mathbb R^n\to  f(\Omega)\subset \mathbb R^n$ with either diffeomorphisms or piece-wise affine homeomorphisms. This problem is not trivial because the usual approximation techniques like mollification or Lipschitz extension using maximal operator destroy, in general, the injectivity.

	In variational models of nonlinear elastic deformations of solid flexible bodies we search for minimisers of energy functionals (often) of the form
	$$
	I(f)=\int_{\Omega} W(Df)\,dx\,,
	$$
	where $W:\er^{n\times n}\to\er$ is a stored-energy functional satisfying
	\eqn{E_WA}
	$$
	W(A)\to+\infty\quad\text{as $\det A\to 0$} \qquad W(A)=+\infty\quad\text{if $\det A\leq0$}\, .
	$$
	We require that our model respects the law of non-interpenetration of matter and, assuming that the body does not fracture or break, it is therefore natural to look for a minimiser among homeomorphisms. Therefore we minimise the functional over Sobolev homeomorphisms satisfying given boundary values.
	
	Intuition gives the impression that the minimising deformation should be in essence a diffeomorphism (say up to a null set). A naive perception is that a Sobolev homeomorphism is essentially a diffeomorphism. In fact the question of the regularity of minimisers and the question of the behaviour of Sobolev homeomorphisms are somewhat inter-related. A key step to proving the regularity of minimisers (see \cite{Ball, Ball2}) is to show that any Sobolev homeomorphism can be approximated arbitrarily well by diffeomorphisms. This is the so-called Ball-Evan's approximation question and is currently a topic of much interest. The initial breakthrough in the planar case were the papers \cite{IKO} and \cite{IKO2} for $f\in W^{1,p}$, $p>1$. This was followed by \cite{HP}, planar homeomorphisms in $W^{1,1}$. The latter techniques have further been developed in \cite{BiP} (bi-Sobolev $W^{1,1}$ case), \cite{PR, PR2} (BV case) and \cite{C} (Orlicz-Sobolev case). There are still many open questions in this context, especially $W^{1,p}$-bi-Sobolev and dimension $n=3$.
	
	Naturally, given that one can approximate homeomorphisms by diffeomorphisms in the Orlicz-Sobolev sense (see \cite{C}), the question of approximation in other classes of function spaces such as Lorentz Sobolev spaces, or Grand Sobolev spaces arises. These classes are an important tool in studying the regularity of solutions of certain PDEs and variational problems and usually provide sharper results for existence and regularity of a solution. To provide results for all these important classes at once we will study the question of approximation in general Banach function space. The result from \cite{C} gives us a strong indication that a similar result should hold under a more general context. On the other hand in general r.i. spaces one lacks the explicit norm expression utilised for that result and there are several obstacles that must be overcome. 
	
	Let us just recall that by a r.i. space we mean a Banach function space $X(\Omega)$ on the domain $\Omega$, endowed with a norm $\| \cdot \|_{X(\Omega)}$ such that
	$$
	\| u\|_{X(\Omega)}= \| v\|_{X(\Omega)} \;\,\text{whenever } \;\, u^*= v^*
	$$
	where $u^*$ and  $v^*$ denote the decreasing rearrangements of the functions $u$, $v$.
	The Sobolev space over $X$ is defined as
	$$
	W^1X(\Omega,\mathbb R^2)=\left\{ f\in W^{1,1}_{\text{loc}}(\Omega, \mathbb R^2): \|f\|_{W^1X(\Omega)}= \| Df\|_{X(\Omega)} + \| f\|_{X(\Omega)} <\infty\right\}.
	$$
	For the full definition see Definition~\ref{RIS}.
	
	In order to introduce our main result we include the definition of the Lebesgue point property. This property is stronger than the absolute continuity (see \eqref{EpsDel}) of the norm and has been thoroughly characterised in \cite{CCPS}.
	\begin{definition}[Lebesgue point property]\label{LebesgueProperty}
		Let $\Omega \subset \rn$ be measurable. We say that a function space $X(\Omega)$ satisfies the Lebesgue property if for all $u\in X(\Omega)$ and almost all $x\in \Omega$ one has
		\begin{equation}\label{LPPinX}
		\lim\limits_{r\to 0+}\frac{\|[u - u(x)]\chi_{B(x,r)}\|_{X(\Omega)}}{|B(x,r)|} =0.
		\end{equation}
		We refer to the points $x$ for which \eqref{LPPinX} hold as Lebesgue points of $u$ in $X$.
	\end{definition}
	By a finitely connected domain $\Omega \subset \er^2$ we refer to a domain $\Omega \subset \er^2$ such that $\er^2 \setminus \Omega$ has a finite number of components.
	\begin{thm}\label{main}
		Let $\Omega\subseteq\er^2$ be a finitely connected domain. Let $X(\Omega)$ be a rearrangement-invariant Banach function space satisfying the Lebesgue point property (see~Definition~\ref{LebesgueProperty}). Let $f\in W^1 X(\Omega, \er^2)$ be a homeomorphism. For arbitrary $\varepsilon>0$ there exists a diffeomorphism $\tilde{f}$ such that
		$$
		\|Df-D\tilde{f}\|_{X(\Omega)}<\varepsilon \quad\textup{and}\quad \|f-\tilde{f}\|_{L^\infty(\Omega)}<\varepsilon.
		$$
	\end{thm}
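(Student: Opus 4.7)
The plan is to follow the architecture pioneered by Hencl--Pratelli \cite{HP} and adapted to the Orlicz--Sobolev setting in \cite{C}, replacing the integral/modular bookkeeping with arguments based on absolute continuity of the $X$-norm (which, as observed in \cite{CCPS}, is implied by the Lebesgue point property). The approximation is constructed in three stages: (i)~a geometric construction of a piecewise affine homeomorphism $g$ uniformly close to $f$; (ii)~a Sobolev estimate $\|Df-Dg\|_{X(\Omega)}<\eps/2$; and (iii)~a smoothing of $g$ near its singular set producing the diffeomorphism $\tilde f$ with $\|Dg-D\tilde f\|_{X(\Omega)}<\eps/2$. As a preliminary reduction, I would exhaust $\Omega$ by finitely connected subdomains $\Omega_k\Subset\Omega$ with piecewise smooth boundary and use absolute continuity of the $X$-norm to confine the work to a fixed $\Omega_k$ at the cost of modifying $f$ only on a thin collar.

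Next, fix a small $\delta>0$ and a regular square grid $\G_\delta$ of side $\delta$; by a Fubini-type selection argument, translate the grid so that every vertex is a Lebesgue point of $f$ and of $Df$ in the sense of~\eqref{LPPinX}, the restriction of $f$ to the grid is absolutely continuous with finite one-dimensional $|Df|$-integrals, and no grid edge carries $Df$-mass. On each cell $Q\in\G_\delta$ define $g|_{\partial Q}$ as a piecewise affine reparameterisation of $f|_{\partial Q}$ whose nodes cluster on parts of $\partial Q$ where $f$ varies rapidly, and then fill the interior of $Q$ by the Hencl--Pratelli piecewise affine construction. The resulting $g$ is a global piecewise affine homeomorphism of $\Omega_k$ onto $g(\Omega_k)$; the $L^\infty$ closeness $\|f-g\|_{L^\infty(Q)}$ is controlled by the oscillation of the continuous homeomorphism $f$ on $Q$, and the gradient obeys a cellwise bound of the form $\|Dg\chi_Q\|_{X(\Omega)} \le C\,\|Df\chi_{Q^*}\|_{X(\Omega)}$, where $Q^*$ is a slight enlargement of $Q$ and $C$ is independent of $\delta$.

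The heart of the proof is the estimate $\|Df-Dg\|_{X(\Omega)}<\eps/2$. For each cell $Q$ whose centre $x_Q$ is a Lebesgue point of $Df$, set $A_Q:=Df(x_Q)$; the Lebesgue point property gives
$$
\|[Df-A_Q]\chi_Q\|_{X(\Omega)} = o(|Q|) \qquad (\delta\to 0).
$$
On \emph{good} cells --- those on which the oscillation of $f$ is comparable to $\delta|A_Q|$ --- the HP construction forces $Dg$ to be close to $A_Q$ in mean, and a careful analysis of the filling shows that $|Df-Dg|\chi_Q$ is dominated pointwise by a function whose total $X$-norm over the union of good cells tends to $0$ with $\delta$. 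The \emph{bad} cells form a set of arbitrarily small measure on which $|Dg|$ is merely bounded by a maximal-type functional of $Df$, but their contribution to $\|Df-Dg\|_{X(\Omega)}$ is killed by the absolute continuity of the norm. The principal obstacle is precisely this assembly: the $X$-norm is not additive over disjoint cells, so the cellwise $o(|Q|)$ bounds cannot simply be summed; the Lebesgue point property is used to package them into a single global pointwise majorant of $|Df-Dg|$ whose $X$-norm tends to zero.

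Finally, to upgrade $g$ to a diffeomorphism $\tilde f$, I would apply the regularization of \cite{IKO,HP}: the singular set of $g$ is a finite union of segments and vertices, and on a shrinking neighbourhood of it one convolves $g$ with a smooth kernel adapted to the local affine pieces, producing a $C^\infty$ map $\tilde f$ with $\det D\tilde f>0$ throughout and coinciding with $g$ outside this neighbourhood. Since $Dg-D\tilde f$ is supported on a set of measure $o(1)$ while both $|Dg|$ and $|D\tilde f|$ are uniformly bounded (because $g$ is piecewise affine on finitely many pieces in each $\Omega_k$), absolute continuity of the $X$-norm yields $\|Dg-D\tilde f\|_{X(\Omega)}<\eps/2$ once the neighbourhood is small enough, and the $L^\infty$ error is similarly controlled.
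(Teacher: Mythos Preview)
Your overall architecture matches the paper's: exhaust $\Omega$ by $\Omega_k$, build a grid, define a piecewise affine homeomorphism cell by cell, then smooth via the Mora-Corral--Pratelli result (the paper uses \cite{MP} rather than \cite{IKO}, but the idea is the same and your argument for step~(iii) is correct).

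There is, however, a genuine gap in step~(ii). You write that on good cells ``the HP construction forces $Dg$ to be close to $A_Q$ in mean''. This is not what the Hencl--Pratelli extension (the paper's Theorem~\ref{extension}) delivers: it only gives $\|Dg\|_{L^\infty(Q)}\le C\|D_\tau\varphi\|_{L^\infty(\partial Q)}$, i.e.\ control of the \emph{size} of $Dg$, not its proximity to any prescribed matrix $A_Q$. If you use the HP filling uniformly on every cell, you cannot conclude $\|Dg-A_Q\|$ is small on the good cells, and the whole $X$-norm estimate collapses. The paper resolves this by treating three regimes separately. On cells where $A_Q=Df(a_Q)$ is nondegenerate with $|A_Q|$ and $|A_Q^{-1}|$ controlled (the class $\G_k$), one does \emph{not} use HP at all: the map linear on each of the two triangles of $\Q$ matching $f$ at the four vertices is already injective (Lemma~\ref{PWLF}$(v)$) and automatically satisfies $|Dg-A_Q|\lesssim\epsilon_k$ by the differentiability estimate. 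On cells where $A_Q\neq 0$ but $J_f(a_Q)=0$ (the class $\N_k$), the two-triangle map is \emph{not} injective and HP gives no closeness to $A_Q$; this case requires the separate extension Theorem~\ref{extension2}, which builds a homeomorphism whose derivative is close to the rank-one matrix on a large subset $W\subset Q$ and merely bounded on $Q\setminus W$ (and one then invokes Lemma~\ref{Rozumny} to pass from the resulting $L^1$/$L^\infty$ control to an $X$-norm bound). Only on the zero-derivative cells $\Z_k$ and the bad cells $\B_k$ is the raw HP extension used, and there it suffices because either $|Dg|$ is small or the total measure is small. Your proposal does not distinguish these regimes and so misses both the simple two-triangle trick for $\G_k$ and the special construction for $\N_k$.

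A smaller point: you over-worry about the assembly of the cellwise bounds. Since the Lebesgue point property is stated \emph{in the $X$-norm}, one has $\|[Df-A_Q]\chi_Q\|_{X}\le \epsilon_k\,|Q|$ directly, and the plain triangle inequality together with bounded overlap of the $2Q$'s gives $\sum_Q \|[Df-A_Q]\chi_Q\|_{X}\le C\epsilon_k\,|\Omega_k\setminus\Omega_{k-1}|$ (this is exactly the paper's estimate~\eqref{BreakMyBones}). No ``single global pointwise majorant'' is needed for this part; the maximal-function/Riesz--Herz comparison is used only for the $D\hat f$ term on the bad set $\B_k$ and on the collar $S_k$.
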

	
	\begin{remark}
		Not only can we approximate a Sobolev homeomorphism by diffeomorphisms but by locally finite piece-wise affine homeomorphisms in r.i. spaces satisfying the Lebesgue point property. Moreover, if $\partial \Omega$ is a polygon on which $f$ is piece-wise linear then we can approximate $f$ by finitely piece-wise affine homeomorphisms.
	\end{remark}
	
	\begin{remark}
		Theorem~\ref{main} provides us with a diffeomorphic approximation of homeomorphisms in customary classes of r.i.\ spaces. Indeed, besides recovering the case of Sobolev-Orlicz space $W^{1, \Phi}$, where $\Phi$ is a Young function satisfying $\Delta_2$-condition, we can consider also the Sobolev-Lorentz spaces, taking $X=L^{p,q}$ for $1\leq q\leq p<\infty$. Another interesting case is $X=\Lambda_\varphi$, the Lorentz endpoint space associated with a (non-identically vanishing) concave function $\varphi\colon [0,\infty)\to[0,\infty)$ satisfying $\lim_{s\to0+}\varphi(s)=0$. For details, see~\cite{CCPS}.
	\end{remark}

	It is not hard to observe that the absolute continuity of the norm of $X$ is a necessary condition for diffeomorphic approximation of homeomorphisms in $W^{1}X$ and by \cite{CCPS} it is also necessary for the Lebesgue point property. Our technique relies heavily on estimates derived directly from the Lebesgue point property. The authors hypothesise that the Lebesgue point property is in fact necessary for the approximation of $f\in W^{1}X$ by smooth functions (independent of injectivity).
	
	\subsection{A brief description of the proof of Theorem~\ref{main}}
	
	In this subsection we outline the basic plan of our proof of Theorem~\ref{main}. As suggested above the general concept is similar to that in \cite{HP} and \cite{C}. Assume that we have a homeomorphic and locally-finite piece-wise affine approximation of $f$. We can then approximate these homeomorphisms by diffeomorphisms using \cite{MP}. The diffeomorphisms from this result coincide with the original piece-wise affine homeomorphisms up to a tiny set. Further they have the same Lipschitz constant as the approximated map up to a bounded multiplicative constant. The combination of the above two facts with the absolute continuity of the norm of $X$ means that the diffeomorphisms given by \cite{MP} also converge to the piece-wise affine homeomorphisms in $W^{1}X$. The entire argument is in Lemma~\ref{Approx} and thanks to this, the question reduces to approximating by piece-wise affine homeomorphisms. 
	
	In Lemma~\ref{Segregation} we create nested subdomains $\Omega_{k}$ of $\Omega$ and a grid of squares of a given size is made in each $\Omega_{k} \setminus \overline\Omega_{ k-1}$ so that the following holds
	\begin{itemize}
		\item all the squares have the same size and fill all of $\Omega_{k} \setminus \overline\Omega_{k-1}$ except a set so small that the norm of the restriction of $f$ to this set is bounded by $2^{-k}\epsilon$,
		\item thanks to the Lebesgue point property for $Df$, $f$ is very close to an affine function except for some squares whose union has measure so small that the norm of the restriction of $|Df|$ to this set is bounded by $2^{-k}\epsilon$.
	\end{itemize}
	In general it is necessary to know that the behaviour of $f$ is reasonable on the boundary of the squares. This is not automatically true but by slightly moving the boundaries of the squares it becomes true, which is achieved in Lemma~\ref{GridLock}.
	
	Each of the squares can be split into 2 triangles by dividing along a diagonal (say the southwest-northeast diagonal). On the squares where $f$ is very close to a nice affine map (Jacobian not too small, derivative not too big or too small) the map which is affine on each of the pair of triangles with values coinciding with $f$ at the vertices of the square approximates $f$ well. If the Jacobian is zero we use the result Theorem~\ref{extension2} to approximate. Now either the map is close to a constant on the given square or is not close to any linear map. In either case we use Theorem~\ref{extension} to define our piece-wise affine approximation. In the last two cases the smallness either of $Df$ or of the set is enough to make sure that the error is small.

	Finally we have Lemma~\ref{Click} to fill the small space around the boundary of $\Omega_{ k}$. Because the size of the set is so small we get that the norm of the map and the approximation is less than $2^{- k}\epsilon$.
	
	\section{Preliminaries}
	In this section we shortly list the basic notation that will be used throughout the paper. The set $Q(c,r) = \{(x,y)\in\er^2: |x -c_1| \leq r, |y-c_2|\leq r\}$ will denote the closed square centred at $c$ with side length $2r$. Similarly, $B(c,r) = \{(x,y)\in \er^2 : |(x,y) - c| < r \}$ is the open ball centred at $c$ with radius $r$. For the ease of notation, for $t>0$ we will denote $t Q(c,r)=Q(c,tr)$, and $tB(c,r)=B(c,tr)$.
	
	Sometimes we will work on 1-dimensional objects in $\er^2$, which can be parametrised by a Lipschitz curve $\varphi\colon [0,1]\to\er^2$, for example segments, and various polygons. We may assume that our $\varphi$ is one-to-one and $|\varphi'|$ is constant almost everywhere. For almost all $t\in (0,1)$ there exists a vector $\frac{\varphi'(t)}{|\varphi'(t)|}$which we call the tangential vector at the point $\varphi(t)$ and denote this vector as $\tau = \tau(\varphi(t))$. If a mapping $f$ is defined on $\varphi([0,1])$ and $f\circ\varphi$ is absolutely continuous, then we call
	$$
	D_{\tau}f = \frac{(f\circ\varphi)'}{|\varphi'|}
	$$
	tangential derivative along the curve $\varphi$.

	We take advantage of standard denotation of average integrals using the symbol $-\hskip -10pt \int$. Since we integrate with respect to different measures, we emphasise the fact that we divide the integral by the measure of the set we integrated over, {\it where we measure the set with the same measure used in the integral}.
	
	Through out the whole paper we will assume without loss of generality (see e.g.~\cite[Theorem 5.22]{HK}) that
	\begin{equation}\label{JPAE}
	J_f\geq 0 \text{ a.e.}
	\end{equation} 
	
	\subsection{Rearrangement invariant function spaces}
	
	Here we collect all the background material that will be used in the paper.
	
	Let $E$ be a (non-negligibile) Lebesgue measurable subset of $\mathbb R^n$ of finite measure. We denote by $\mathcal{L}^n(E)$ its Lebesgue measure. We set 
	$$
	L^0(E)=\left\{ f: f \text{ is measurable function on } E \text{ with values in }[-\infty, + \infty] \right\}
	$$
	and
	$$L^0_+(E)= \left\{ f\in L^0(E): f\geq 0\right\}.$$
	From now on we shall identify functions $f_1,f_2$ for which $\mathcal{L}^n(\{f_1\neq f_2\})=0$ in the space $L^0$.
	The non-increasing rearrangement $f^*\colon [0, +\infty] \rightarrow[0, +\infty]$ of a function $f\in L^0(E)$ is defined by
	$$
	f^*(s)=\inf\left\{ t\geq 0: \mathcal{L}^n\big (\left\{x\in E: |f(x)|>t  \right\}\big) \leq s \right\} \qquad s\in [0, + \infty)
	$$
	and the \textit{Hardy-Littlewood maximal function} $f^{**}:(0,\mathcal{L}^n(E))\to [0,+\infty)$ is given by
	$$
	f^{**}(t):=\frac{1}{t}\int_0^t f^*(s)\dx s.
	$$
	
	\begin{definition}\label{RIS}
		Let $E$ be a Lebesgue measurable subset of $\mathbb R^n$ and let $\| \cdot \|_{X(E)}: L^0(E) \rightarrow [0, +\infty]$ be a functional. Consider the following properties
		\begin{enumerate}[\upshape(P1)]
	\item $\|\cdot\|_{X(E)}$ is a norm on $L^0(E)$.\label{primo}
			\item For all $f,g\in L^0_+(E)$ the inequality $f(x)\leq g(x) \textup{ for }\text{a.e. \textit{x} in } E \text{ implies }\\ \| f\|_{X(E)}\leq \|g\|_{X(E)}$. 
			\item $\sup\limits_{k}\| f_k\|_{X(E)}= \| f\|_{X(E)}$ if $0\leq f_k(x) \nearrow f(x)$ for a.e. $x$ in E.
			
			\item Let $G\subset E$ be a set of a finite measure. Then
			$$
			\|\chi_G\|_{X(E)}<\infty.
			$$
			\item Let $G\subset E$ be a set of a finite measure. Then there exists a constant $C_G$ depending only on the choice of the set $G$ for which
			$$
			\|f\chi_G\|_{L^1(E)}\leq C_G\|f\chi_G\|_{X(E)}.
			$$
			for all $f\in L^0(E)$.
			
			\item \label{ultimo}$$
			\| f\|_{X(E)}= \|g\|_{X(E)}\text{ whenever } f^*= g^* \text{\emph{(rearrangement invariance)}}
			$$
		\end{enumerate}
		If $\|\cdot\|_X$ enjoys the properties (P1)-(P5) we call it a Banach function norm. If it also enjoys (P6) we call it a rearrangement invariant Banach function norm. Let $\|\cdot\|_X$ be (rearrangement-invariant) Banach function norm then we call set
		$$
		X(E):=\{f\in L^0(E):\|f\|_{X(E)}<\infty\}
		$$
		endowed with the norm $\|\cdot\|_{X(E)}$ a (rearrangement-invariant) Banach function space.
	\end{definition}

	Following the properties (P2),(P4), (P5) one can observe that if $\mathcal{L}^n(E)<\infty$ for arbitrary Banach function space, the following holds true
	\begin{equation}\label{RISANDWICH}
	L^\infty(E) \hookrightarrow X(E) \hookrightarrow L^1(E)    
	\end{equation}
	where $\hookrightarrow$ stands for a continuous embedding.
	
	Given a r.i. Banach function space $X(E)$ and $0\leq s < \mathcal{L}^n(E)$ one may define the \textit{fundamental function of $X(E)$} by
	\begin{equation}\label{Jihad}
	\varphi_{X(E)}(s):= \| \chi_G\|_{X(E)}
	\end{equation}
	where $G\subset E$ is an arbitrary subset of $E$ of measure $s$.
	
	The properties of r.i.\ norms guarantee that the fundamental function is well defined. For every r.i. space $X(E)$, its fundamental function $\varphi_{X}$ is non-decreasing, $\varphi_{X(E)}(0)=0$  and $\varphi_{X}(t)/t$ is non-increasing.
	
	Given a Banach function space $X(E)$ define an associated Banach function space $X'(E)$ as a subspace of measurable functions endowed by associated norm given by
	$$
	\|f\|_{X'(E)}:=\sup\limits_{\|g\|_X\leq 1}\int_{E}fg \dx x.
	$$
	Note, that the associated space of a Banach function space is also a Banach function space and the following H\" older inequality holds
	\begin{equation}\label{TexasHoldEm}
	\int_E fg \dx x\leq \|f\|_{X(E)}\|g\|_{X'(E)}.
	\end{equation}
	Let us remind the reader that for arbitrary r.i. Banach function space we have that
	\begin{equation}\label{HarLiPo}
	g^{**}(t)\leq f^{**}(t) \quad \forall  t\in(0,\mathcal{L}^n(E))\textup{ then } \|g\|_{X(E)}\leq \|f\|_{X(E)}.
	\end{equation}
	The proof of this classical result called Hardy-Littlewood-Polya inequality may be found for instance in \cite[Theorem~4.6, Chapter~2, pg.~61]{BS}.

	Given a Banach function norm $\|\cdot\|_{X(E)}$ and a normed linear space $Y$ we shall define 
	$$
	X(E,Y):=\{f\colon E\to Y: \left(x\mapsto\|f(x)\|_Y\right)\in X(E)\}.
	$$
	Let $f\colon \er^n\to \er^k$ be a locally integrable function; we define the maximal operator of such a function by
	$$
	Mf(x):=\sup_{Q}\frac{1}{\mathcal{L}^n(Q)}\int_{Q}|f(y)|\dx y,
	$$
	where the supremum on the right-hand side is taken over all cubes $Q$ containing $x$.
	Let us also recall the \textit{Riesz-Herz equivalence} that
	\begin{equation}\label{Riesz-Herz}
	(Mf)^*(t)\approx f^{**}(t) \quad \forall t\in(0,\infty),
	\end{equation}
	with constants independent of $t$ and $f$. For proof of this result see {\cite[Theorem 3.8, Chapter~3, pg.~122]{BS}. Using together Riesz-Herz equivalence \eqref{Riesz-Herz} and Hardy-Littlewood-Polya inequality \eqref{HarLiPo} yields the following \emph{norm comparison}
\begin{equation}\label{norm-comparison}
	Mf\le Mg\qquad \Rightarrow\qquad \|f\|_{X(E)}\lesssim \|g\|_{X(E)}
\end{equation}\par
	Let $X(E)$ be a Banach function space. We say that $X(E)$ has \textit{locally absolutely continuous norm} if for any finite measure set $M\subset E$ and any function $f\in X(E)$ one has 
	$$
	M\supset M_n\rightarrow \emptyset\quad\textup{implies}\quad \|f\chi_{M_n}\|_X\rightarrow 0. 
	$$
	
	Note that if $\mathcal{L}^n(E)<\infty$ this property implies the $\varepsilon$-$\delta$-continuity of the norm. This means that for any $f\in X$ and $\varepsilon>0$ one can find $\delta>0$ such that
	\begin{equation}\label{EpsDel}
	\mathcal{L}^n(M)<\delta \textup{ implies } \|f\chi_{M}\|_X<\varepsilon.
	\end{equation}

	Let $E\subset\mathbb{R}^n$ be an open set and let $X(E)$ be a Banach function space. We define the Sobolev space over $X(E)$ by
	$$
	W^1 X(E):=\{f\in W^{1,1}_{\loc}(E,\mathbb{R}^m): \|f\|_{W^1X(E)}=\|Df\|_{X(E)}+\|f\|_{X(E)}\}
	$$
	where we use the standard operator norm to determine the size of $|Df|$.	
	
	Now follows a preparatory lemma.
	
	\prt{Lemma}
	\begin{proclaim}\label{Rozumny}
		Let $G\subset\mathbb{R}^n$ be a set of finite measure and $X(G)$ be a r.i. BFS space satisfying
		\begin{equation}\label{lim=0}
			\displaystyle{\lim_{t\rightarrow 0+}}\varphi_X(t)=0.
		\end{equation}
		Then for every $M >0$ and $\tilde{\epsilon} > 0$ there exists a $\tilde{\delta}>0$ such that for all $u\in X(G)$ with $\|u\|_{L^{\infty}(G)} \leq M$ and $\|u\|_{L^{1}(G)}<\tilde{\delta}  \mathcal{L}^n(G)$ one has $\|u\|_{X(G)}< \tilde{\epsilon}  \mathcal{L}^n(G)$.
	\end{proclaim}
	\begin{proof}
		For arbitrary $D>0$ we have
		$$
		\begin{aligned}
		\|u\|_{X(G)}&\leq \|u\chi_{\{|u|>D\}}\|_{X(G)}+\|u\chi_{\{|u|\leq D\}}\|_{X(G)}\\
		&\leq M\varphi_X\big(\mathcal{L}^n(\{|u|>D\})\big)+DC_{L^{\infty}\rightarrow X}\\
		&\leq M\varphi_{X}\left(\frac{\tilde{\delta} \mathcal{L}^n(G)}{D}\right)+DC_{L^{\infty}\rightarrow X}\\
		&=(1)+(2)
		\end{aligned}
		$$
		where $C_{L^{\infty}\rightarrow X}$ stands for the optimal constant of imbedding of $L^{\infty}(G)$ into $X(G)$. The last inequality follows from
		$$
		D\mathcal{L}^n(\{|u|>D\})\leq \|u\|_{L^1(G)}\leq \tilde{\delta}  \mathcal{L}^n(G)
		$$
		hence
		$$
		\mathcal{L}^n\big(\{|u|>D\}\big)\leq \frac{\tilde{\delta} \mathcal{L}^n(G)}{D}.
		$$
		First choose $D$ such that $(2)<\tilde{\epsilon}  \tfrac{\mathcal{L}^n(G)}{2}$. Then choose $\tilde{\delta} $ such that $(1)<\tilde{\epsilon}  \tfrac{\mathcal{L}^n(G)}{2}$.
	\end{proof}

	\begin{remark}
		If $X$ has the Lebesgue point property, we have by \cite[ Proposition 3.1]{CCPS}, that the norm is locally absolutely continuous. Therefore one has \eqref{lim=0} and Lemma~\ref{Rozumny} can be applied for such a space.
	\end{remark}
	\subsection{The reformulation of known extension results\label{EasyJet}}

	In this section our aim is to prove the following extension theorem, which will allow us to construct homeomorphisms from boundary values and gives us a useful control on their Lipschitz constant.
	
	\prt{Theorem}
	\begin{proclaim}\label{extension}
		There exists a $C>0$ such that for any $r > 0$ and any finitely piece-wise linear and one-to-one function $\varphi:\partial Q(0,r) \to\er^2$ we can find a finitely piece-wise affine homeomorphism $h:Q(0,r)\to \er^2$ such that
		\begin{equation}\label{hope}
		\|Dh\|_{L^{\infty}(Q(0,r))} \leq C_{\eqref{hope}} \|D_{\tau}\varphi\|_{L^{\infty}(\partial Q(0,r))},
		\end{equation}
		and
		$$
		h|_{\partial Q(0,r)}=\varphi.
		$$
		In \eqref{hope} the $L^{\infty}$ space on the left is with respect to the two dimensional Lebesgue measure $\mathcal{L}^2$ and the $L^{\infty}$ space on the right is with respect to the one dimensional Hausdorff measure $\mathcal{H}^1$.
	\end{proclaim}

	\begin{proof}
		The construction is precisely that of Hencl and Pratelli from \cite[Theorem 2.1]{HP} later expanded upon in \cite{ER} and \cite{C}. The construction starts by making a mapping that is not injective (only monotone) and then in the end making a small adjustment to make the mapping injective.
		
		\step{1}{A square with `good' corners}{1step1}
		
		We construct our map on a rotation of the square by $45$ degrees, we call this rotated square $Q_r$. For each $t \in (-\sqrt{2}r,\sqrt{2}r)$ denote the corresponding pair of horizontally opposite points on the rotated square $O = O_t = (|t|,t), P = P_t = (-|t|,t)$. For $t\neq 0$ we call $\widehat{OP}$ the path $[O, (0,\sgn(t)\sqrt{2}r)] \cup [ (0,\sgn(t)\sqrt{2}r), P]$ the shorter of the 2 paths from $O$ to $P$ along the boundary of $Q_r$. It was shown in \cite[Theorem 2.1, Step 1]{HP} that (up to a bi-Lipschitz transformation) one may assume that there exists a $C$ such that for any $t\in (-\sqrt{2}r,\sqrt{2}r)$ it holds that 
		$$
		\oint_{\widehat{OP}} |D_{\tau}\varphi| \dx \mathcal{H}^1\leq C \oint_{\partial Q_r}|D_{\tau}\varphi|\dx \mathcal{H}^1.
		$$
		For $t=0$ the claim holds trivially for $C=2$, for whichever path in $\partial Q_r$ we choose. The details are to be found in \cite[Theorem 2.1, Step 1]{HP}. The argument is that given $\mathcal{A}$ the set of points $P \in \partial Q_r$ such that one can find an `arc' $L$ in $\partial Q_r$ of length $2d$ symmetrically around $P$ such that
		$$
		\oint_L |D_\tau \varphi| \dx \mathcal{H}^1> 6 \oint_{\partial Q_r}|D_{\tau}\varphi|\dx \mathcal{H}^1
		$$
		is not very big. In fact using a Vitali covering of $\mathcal{A}$ one can estimate that
		$$
		\mathcal{H}^1(\mathcal{A}) \leq \sum_i 6 r_i \leq \sum_i \frac{\int_{L_i} |D_{\tau}\varphi|}{\int_{\partial Q_r}|D_{\tau}\varphi|} < 1 < \tfrac{1}{2}\mathcal{H}^1(Q_r)
		$$
		and since $\mathcal{A}$ covers less than half $\partial Q_r$ one can find a pair of opposing points in $\partial Q_r\setminus \mathcal {A}$. Any such pair can be mapped onto the north and south poles of $\partial Q_r$ by a finitely piece-wise affine map, which preserves arc length on $\partial Q$ and the bi-Lipschitz constant is independent of the choice of the pair.
		
		\step{2}{Definition of $h$ and estimate of $|Dh|$}{1step2}
		
		We define $h = \varphi$ on $\partial Q_r$. By `$h$-vertex' of $\partial Q_r$ we refer to a point $P\in \partial Q_r$ such that $D_{\tau} \varphi$ does not exist. For every $P$ a $h$-vertex of $\partial Q_r$ we define $h$ on the segment $[PO]$ (where we denote $O = (-P_1, P_2)]$) in such a way that the image of $[PO]$ in $h$ is the geodesic from $\varphi(P)$ to $\varphi(O)$ inside the closure of the bounded component of $\er^2 \setminus \varphi(\partial Q_r)$ parametrized at constant speed. The geodesic is a polyline and this way $h$ is piecewise linear on the union of $\partial Q_r$ and the added horizontal lines. Using the fact that the geodesic has length bounded by $\mathcal{H}^1\big(\varphi(\widehat{PO})\big)$, that $h$ has constant speed on the horizontal segment and that $\mathcal{H}^1\big( [P O] \big) =\tfrac{1}{\sqrt{2}}\mathcal{H}^1\big( \widehat{P O} \big) $ we get that
		\begin{equation}\label{FooFoo}
		|D_1 h|  = \frac{\mathcal{H}^1\big( h([P O]) \big)}{\mathcal{H}^1\big( [P O] \big)} \leq \sqrt{2}\oint_{\widehat{P O}} |D_{\tau}\varphi| \leq C\oint_{\partial Q_r} |D_{\tau}\varphi|
		\end{equation}
		on $[PO]$, where the last estimate is from step~\ref{1step1}.

		Let $P_1$ and $P_2$ be a pair of adjacent $h$-vertices on $\partial Q_r$ and let $O_1, O_2$ be the corresponding pair of horizontally opposing points in $\partial Q_r$. Call the strip $S$ the set between a pair of neighbouring horizontal lines, i.e. $S = \operatorname{co} \{P_1,P_2, O_1, O_2\}$, where $\operatorname{co}$ denotes the convex hull. We have defined $h$ on $\partial S$. By $F_S$ we denote the union of $h(\partial S)$ with the set disconnected from infinity by $h(\partial S)$.
		
		We simply separate $S$ into triangles $A_1A_2A_3$ with $[A_1A_2]$ lying on one of the horizontal segments and $[A_2A_3]$ is vertical and all of $A_1,A_2,A_3$ lie on one of the horizontal segments $[P_1O_1]$ or $[P_2O_2]$. Since the images of the corners $A_1A_2A_3$ in $h$ have been defined there is exactly one affine map sending $A_i$ onto $h(A_i)$ and we define $h$ as this affine map on each triangle $\co\{A_1,A_2,A_3\}$. Then immediately from \eqref{FooFoo} we get
		\begin{equation}\label{GooGoo}
		|D_1h| \leq C\oint_{\partial Q_r} |D_{\tau}\varphi| \leq C \|D_{\tau}\varphi\|_{L^{\infty}(\partial Q_r)}.
		\end{equation}
		A careful analysis of the geometry of the geodesics, fully exposed in \cite[Theorem 2.1, Step 4,5,6]{HP} proves that these affine images of $A_1A_2A_3$ lie in $F_S$ and provides an estimate on $|h(A_2) - h(A_3)|$. We summarize the steps from \cite[Theorem 2.1, Step 4,5,6]{HP} in the following lemma. The key length estimate \eqref{BSS} is \cite[estimate (2.4)]{HP}.

		\begin{lemma}\label{verticals}
			Let $P_1$ and $P_2$ be a pair of adjacent $h$-vertices on $\partial Q_r$ and let $O_1, O_2$ be the neighbouring pair of horizontally opposing points in $\partial Q_r$. Call $S = \operatorname{co} \{P_1,P_2, O_1, O_2\}$ and $F_S$ is the union of $h(\partial S)$ with the part(s) of the plane it disconnects from infinity. Then $h([A_2A_3]) \subset F_S$ and
			\begin{equation}\label{BSS}
			\H^1(h([A_2A_3])) \leq \max\{\H^1\big(\varphi([P_1P_2])\big), \H^1\big(\varphi([O_1O_2])\big)\}.
			\end{equation}
		\end{lemma}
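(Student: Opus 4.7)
The plan is to trace through the explicit Hencl--Pratelli geometry of $h$ on $S$, deducing both assertions from the nature of the geodesic polylines and the affinity of $h$ on each triangle. First I would identify the closed curve $h(\partial S)$ as a concatenation of four arcs: the two geodesic polylines $h([P_1O_1])$ and $h([P_2O_2])$, which lie in the closure of the bounded component $U$ of $\er^2\setminus\varphi(\partial Q_r)$ by construction, together with the two arcs $\varphi(\widehat{P_1P_2})$ and $\varphi(\widehat{O_1O_2})$ on $\varphi(\partial Q_r)$. The geodesics meet $\varphi(\partial Q_r)$ only at the four prescribed endpoints, so $h(\partial S)$ is a Jordan curve and $F_S$ is precisely the closure of its bounded complementary component.

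Next I would prove $h([A_2A_3])\subset F_S$ by an inductive sweep across the triangulation of $S$. Each triangle $\co\{A_1,A_2,A_3\}$ has its three vertices mapped onto $h(\partial S)$, so its (affine) image is the triangle spanned by the three corner images. The ordering of the triangles across $S$ matches the combinatorial ordering of the geodesic vertices, so consecutive image triangles share an edge on $h(\partial S)$ and fit together to tile $F_S$ without overlap. The image of the vertical segment $[A_2A_3]$ is then a chord of $F_S$ connecting two points of $h(\partial S)$. The geometric input that rules out an image triangle ``folding out'' of $F_S$ is precisely the geodesic property: such a fold would produce an alternative path in $\overline U$ violating the minimality of one of the two polylines.

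For the length bound \eqref{BSS}, I would use that $h$ is affine on the triangle, so $\H^1(h([A_2A_3]))=|h(A_2)-h(A_3)|$, with $h(A_2)$ lying on one of the geodesic polylines and $h(A_3)$ on the other. Bounding this distance means comparing the Euclidean segment to paths along $h(\partial S)$ that go from $h(A_2)$ to $h(A_3)$ via $\varphi(\widehat{P_1P_2})$ or via $\varphi(\widehat{O_1O_2})$; the geodesic minimality of each polyline is then used to absorb the portions traversed along geodesic $1$ and geodesic $2$, leaving only the contribution of a single boundary arc. The main obstacle is exactly this last bookkeeping step: a naive triangle inequality yields a bound involving both arc lengths plus geodesic detours, and one must exploit the minimality of both geodesics simultaneously with a judicious choice of comparison path to distill the final estimate down to the \emph{maximum} of the two arc lengths. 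This is the content of estimate~(2.4) in \cite{HP}, and its careful verification is the technical heart of the lemma.
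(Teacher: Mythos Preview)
Your outline is broadly aligned with the paper, which in fact gives no independent proof: the lemma is stated there as a summary of \cite[Theorem~2.1, Steps~4--6]{HP}, with \eqref{BSS} identified as \cite[estimate~(2.4)]{HP}. Since you likewise defer the hard length estimate to that reference, at the level of strategy you agree.

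Two points of your sketch need correction, however. At this stage of the construction (before the injectification carried out afterwards) the map $h$ is only monotone, not injective: the geodesic polylines are shortest paths in the \emph{closed} Jordan domain bounded by $\varphi(\partial Q_r)$, so they typically run along portions of $\varphi(\partial Q_r)$ at reflex vertices, and the two geodesics for $[P_1O_1]$ and $[P_2O_2]$ may share common subarcs with each other. Hence $h(\partial S)$ is in general \emph{not} a Jordan curve, and the image triangles do \emph{not} ``tile $F_S$ without overlap''. The lemma is deliberately phrased as a containment $h([A_2A_3])\subset F_S$ precisely because one cannot assert more; the argument in \cite{HP} that a triangle does not fold across $h(\partial S)$ is local (a fold would shortcut one of the geodesics), not an inductive tiling sweep. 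Your identification of the length bound as the technical heart, and of the comparison-path mechanism needed to reduce to the \emph{maximum} rather than the sum of the two boundary arc lengths, is accurate and matches what \cite{HP} does.
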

		
		But then from \eqref{BSS}, on the triangle $\co\{A_1, A_2, A_3\}$ we have
		\begin{equation}\label{MooMoo}
		\begin{aligned}
		|D_2h|  &= \frac{\mathcal{H}^1(h([A_2A_3]))}{\mathcal{H}^1([A_2A_3])}\\
		& \leq \sqrt{2}\frac{\max\{\H^1\big(\varphi([P_1P_2])\big), \H^1\big(\varphi([O_1O_2])\big)\}}{|P_1 - P_2| }\\
		& \leq \sqrt{2}\|D_{\tau}\varphi\|_{L^{\infty}(\partial Q_r)}
		\end{aligned}
		\end{equation}
		This holds on all triangles $\co\{A_1, A_2, A_3\}$ in all strips $S$ between neighbouring horizontal lines. The triangle at the north and south pole of $Q_r$ is estimated similarly.
		
		\step{3}{Injectification of $h$}{1step3}
		
		The final step is to replace the image of the lines $[PO]$ with something very close to the geodesics but so that the images never meet. It suffices to shift the image of each $[PO]$ in $h$ slightly inside $\partial Q_r$ at leach corner of $\varphi(\partial Q_r)$. As long as this change is very small then all of the estimates remain intact (see also \cite[Theorem 2.1, Step 10]{HP}).
		
		The claim \eqref{hope} follows from \eqref{GooGoo} and \eqref{MooMoo}.
	\end{proof}
	
	The following Corollary is an immediate result of Theorem~\ref{extension}. It is the fact that we get an $L^{\infty}$ bound from the $L^{1}$ space that our approach works in the relatively general setting.
	
	\prt{Corollary}
	\begin{proclaim}\label{RePara}
		There exists a constant $C>0$ such that for every $r > 0$ and $\varphi:\partial Q(0,r)\to\er^2$ finitely piece-wise linear and one-to-one function with $|D_{\tau}\varphi|$ constant on each side of $Q_r = Q(0,r) \subset \er^2$, there exists a piece-wise affine homeomorphism $h:Q_r\to \er^2$ such that
		\begin{equation}\label{SuperMan}
			\|Dh\|_{L^{\infty}(Q_r)} \leq \frac{4C_{\eqref{hope}}}{r} \int_{\partial Q_r}|D_{\tau}\varphi|\, d\H^1\,.
		\end{equation}
		and 
		$$
		h|_{\partial Q_r}=\varphi.
		$$
	\end{proclaim}
	\begin{proof}
		If $|D_{\tau}\varphi|$ is constant on sides, then $\|D_{\tau}\varphi\|_{L^{\infty}(\partial Q_r)}\leq \tfrac{4}{r} \int_{\partial Q_{r}}|D_{\tau}\varphi|\, d\H^1$.
	\end{proof}

	Further we will reformulate \cite[Theorem 3.7]{C} to fit in with our current setting better. The question is how to approximate a map which is close to a degenerate linear map $\Phi$ (up to a rotation in the pre-image we may assume that $\Phi = \left(\begin{matrix}
	d,0\\
	0,0\\
	\end{matrix}\right)$).

	\prt{Theorem}
	\begin{proclaim}\label{extension2}
		Let $d> \delta >0$, let $r_0 \in(0,1)$ and let $Q$ be a convex set and the image of $[0,r_0]^2$ in a $2$-bi-Lipschitz mapping which is equal to an affine mapping on $\co\{(0,0), (0,r_0),(r_0,0)\}$ and $\co\{(r_0,r_0), (0,r_0),(r_0,0)\}$. Then for every $\varphi:\partial Q\to\er^2$ finitely piece-wise linear and one-to-one mapping with
		\begin{equation}\label{bound1}
		\int_{\partial Q} \Big|D_{\tau} \varphi(t)-\left(\begin{matrix}
		d,0\\
		0,0\\
		\end{matrix}\right)\tau\Big| \; d\H^1(t)< \delta r_0 ,
		\end{equation}
		and $\|D_{\tau}\varphi\|_{L^{\infty}(\partial Q)}\leq d+2\delta$, where $\tau$ is the unit tangential vector to $\partial Q$,
		there exists a finitely piece-wise affine homeomorphism $g:Q\to \er^2$ and a set $W\subset Q$ such that $\mathcal{L}^2(Q\setminus W)<C\delta r_0^2$, $g=\varphi$ on $\partial Q$,
		\begin{equation}\label{CoolAssEquation}
		\begin{aligned}
		\Big\| Dg(x)-\left(\begin{matrix}
		d,0\\
		0,0\\
		\end{matrix}\right)\Big\|_{L^{\infty}(Q)} &< C(d+1),\\
		\Big\| Dg(x)-\left(\begin{matrix}
		d,0\\
		0,0\\
		\end{matrix}\right)\Big\|_{L^1(W)} &< C\delta r_0^2  \text{ and } \\
		\Big\| Dg(x)-\left(\begin{matrix}
		d,0\\
		0,0\\
		\end{matrix}\right)\Big\|_{L^{\infty}(W)} &< 3d.
		\end{aligned}
		\end{equation}
		
	\end{proclaim}
	
	\begin{proof}
		\step{1}{Definition of $W$ such that $\mathcal{L}^2(Q\setminus W)<C\delta r_0^2$}{SDefW}
		
		Since $Q$ is the convex 2-affine image of a square, it makes sense to talk about its vertices. Calling $A_i$ the vertices of $Q$ and $A = \tfrac{1}{4}\sum_{i=1}^4 A_i$ the centre of $Q$, we define
		$$
		B_i = B_i(\delta) = A_i+ 10 \delta r_0 (A - A_i).
		$$
		We call $W = W(\delta) = \co\{B_1, B_2, B_3, B_4\}$. Then it follows that $|Q\setminus W(\delta)| \leq C\delta r^2_0$. 
		
		\step{2}{Definition of a curve $\gamma$ that goes from close to one end of $\varphi(\partial Q)$ to the other}{SDefGamma}

		We find the point $P \in \partial Q$ such that $P$ is a point where $x^1$ achieves its minimum for $x\in \partial Q$ and $O \in \partial Q$ is a point where $x^1$ achieves its maximum for $x\in \partial Q$. We call $\widehat{PO}$ the shortest path in $\partial Q$ connecting $P$ and $O$. If there are two paths then choose either of them. From \eqref{bound1} we have that
		\begin{equation}\label{BooBoo}
		\int_{\widehat{PO}} \Big|D_{\tau} \varphi(t)-\left(\begin{matrix}
		d,0\\
		0,0\\
		\end{matrix}\right)\tau\Big| \dx \H^1(t)< \delta r_0.
		\end{equation}
		We define a piecewise linear path in the interior of $\varphi(\partial Q)$ following $\varphi(\partial Q)$ from its start very close to $\tP = \varphi(P)$ to its end very close to $\tO = \varphi(O)$. Call this path $\gamma$. The idea how to do this is simple. At every vertex of $\varphi(\partial Q)$ we bisect the interior angle (the bisector goes inside the bounded component of $\er^2 \setminus \varphi(\partial Q)$) and place a point on the bisector with distance to the vertex as small as required. Then we form a polyline by connecting the points we constructed in the same order as the vertices (see \cite[Lemma 3.1]{C} for a precise construction). By placing the points we constructed on the bisectors very close to their respective vertices we ensure that the length of the curve differs from the length along the boundary by a number as small as required.
		
		\step{3}{Definition of $g$ on $W$ and estimates}{SDefGW}
		
		We define the function $l : W \to \er ^2$ as follows. Let $\gamma$ denote the constant speed parametrization of the curve $\gamma$ from $[0,1]$. Then define
		$$
		l(x,y) = \gamma(\frac{x-P_1}{O_1 - P_1}).
		$$
		Now it is not hard to check using \eqref{BooBoo} that
		\begin{equation}\label{PooPoo}
		\int_{W} \Big|D l-\left(\begin{matrix}
		d,0\\
		0,0\\
		\end{matrix}\right)\Big| \dx \mathcal{L}^2< C \delta r^2_0.
		\end{equation}
		The construction of $g$ on $W$ is a slight modification of $l$ so that the map is injective. This is just a question of decomposing $W$ into quadrilaterals of type $W\cap \{x; a<x<b\}$ and instead of mapping them onto the curve $\gamma$ we map them onto a tiny tubular neighbourhood of $\gamma$. On each of these quadrilaterals we use a bi-affine map. The details are in \cite[Theorem 3.7, step 3]{C}. Most importantly we get the estimate on the derivative
		\begin{equation}\label{ZooZoo}
		\int_{W} \Big|D g-\left(\begin{matrix}
		d,0\\
		0,0\\
		\end{matrix}\right)\Big| \dx \mathcal{L}^2< C \delta r^2_0
		\text{ and } \Big|D g-\left(\begin{matrix}
		d,0\\
		0,0\\
		\end{matrix}\right)\tau\Big| \leq 3d
		\end{equation}
		immediately from \eqref{PooPoo}, the fact that $|D_1l|$ is constant and $g$ is as close to $l$ in $W^{1,\infty}$ as we like.

		\step{4}{Extension of $g$ onto $Q\setminus W$ and estimates}{SDefGQ}
		
		The remaining set, $Q\setminus W$, is a tubular neighbourhood of $\partial Q$ in the direction inside of width approximately $\delta r_0$. Therefore it can be divided into small quadrilaterals all 2-biLipschitz equivalent with a square, which we denote by $U_i$.
		
		On each $\partial U_i$ we define a piecewise linear mapping $\varphi_i$ as follows. On $\partial U_i\cap \partial W$ put $\varphi_i = g$ and on $\partial U_i\cap \partial Q$ put $\varphi_i = \varphi$. Then we need to define $\varphi$ on the `radial' segments. This equates to the following task: define mutually non-intersecting polylines from points on $g(\partial W)$ to corresponding points on $\varphi(\partial Q)$ lying inside the bounded component of $\er^2 \setminus (\varphi(\partial Q) \cup g(W))$. This is obviously a task that has a solution. What is more important is that we have a bound on the length of each polyline, which was proved in \cite[Theorem 3.7, step 5, 6]{C} and which we summarize in the following lemma. The key length estimate \eqref{BSSS} is (3.27) of \cite{C}.
		
		\begin{lemma}\label{PolylinesLemma}
			Let $0<\delta < \tfrac{1}{16}$, $w\in \partial W(\delta)$ and $x \in \partial Q$ with $|w - x|< 4\delta r_0$ then there exists a polyline $p_{x,w}$ connecting $\u{x} = g(x) = \varphi(x)$ with $\u{w} = g (w)$ lying inside the bounded component of $\er^2 \setminus (\varphi(\partial Q) \cup g(W))$ (except for the endpoints) and it holds that
			\begin{equation}\label{BSSS}
			\mathcal{H}^1(p_{x,w}) \leq (d+1) |w-x| + C \delta r_0
			\end{equation}
			with $C$ independent on the choice of $x$ and $w$.
		\end{lemma}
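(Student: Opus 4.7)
The plan is to follow the construction of \cite[Theorem 3.7, Steps 5, 6]{C}, adapted to the present setup. The starting observation is that both $\varphi(\partial Q)$ and $g(W)$ lie inside a thin tubular neighborhood of the segment $[\tP,\tO]$ of transverse width $\lesssim\delta r_0$. For $\varphi(\partial Q)$ this is immediate from \eqref{bound1}: integrating $D_\tau\varphi-(d,0;0,0)\tau$ along the sub-arc of $\partial Q$ from $P$ to a general $z$ yields
$$
\Big|\varphi(z)-\tP-\left(\begin{matrix}d,0\\0,0\end{matrix}\right)(z-P)\Big|<\delta r_0.
$$
For $g(W)$ it follows because $\gamma$ was constructed in Step~\ref{SDefGamma} to hug $\varphi(\partial Q)$ as closely as we wish and $g$ is a perturbation of $l$ by an amount that can be made as small as desired on $W$.

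Within this thin tube, the set $\er^2\setminus(\varphi(\partial Q)\cup g(W))$ has two bounded strip-shaped components, one on either side of $\gamma$, each of transverse width $\leq C\delta r_0$. Given $w\in\partial W$ and $x\in\partial Q$ with $|w-x|<4\delta r_0$, both $\u{w}$ and $\u{x}$ lie on the boundary of a common such strip, determined by whether $w$ sits above or below $\gamma$ in the source coordinates. I would first take a short transverse hop from $\u{w}$ into the interior of that strip, and a matching hop to $\u{x}$ at the other end; each contributes at most the tube thickness $\leq C\delta r_0$ to the length.

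The main leg of $p_{x,w}$ is a polyline that tracks $\gamma$ from the horizontal parameter corresponding to $w^1$ to the one corresponding to $x^1$, offset slightly into the strip so as to remain inside the chosen bounded component. Its length is bounded by the length of the corresponding sub-arc of $\gamma$, which, by \eqref{BooBoo} and the freedom to make the bisector detours in $\gamma$ arbitrarily small, is at most the Euclidean stretch $d\,|w^1-x^1|$ plus $C\delta r_0$. Since $|w^1-x^1|\leq|w-x|$, summing the three pieces yields $\H^1(p_{x,w})\leq d|w-x|+C\delta r_0$, which upgrades to the stated bound \eqref{BSSS} after absorbing the transverse-hop contribution into the $+|w-x|$ term via $|w-x|\leq 4\delta r_0$.

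The main obstacle, as in \cite{C}, is the geometric bookkeeping at the vertices of $\varphi(\partial Q)$: the $L^\infty$ bound $\|D_{\tau}\varphi\|_{L^\infty(\partial Q)}\leq d+2\delta$ is far too weak on its own to rule out large transverse excursions of $\gamma$ at individual vertices, so controlling the Hausdorff length of the middle leg requires the integrated estimate \eqref{BooBoo} together with the freedom from Step~\ref{SDefGamma} to place the bisector points at distance much smaller than $\delta r_0$ from their vertices. The end caps near $\tP$ and $\tO$, where the tube picture degenerates, are handled by a direct small-diameter construction, since these caps have diameter $\lesssim\delta r_0$ and may be traversed by a short polyline at cost $C\delta r_0$.
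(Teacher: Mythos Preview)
Your approach is essentially that of the paper (which itself defers to \cite[Theorem~3.7, Steps~5--6]{C}): exploit the thin-tube geometry to build a polyline with one ``long'' leg of length $\lesssim d|w-x|$ and short pieces of length $\lesssim\delta r_0$. The only structural difference is where the long leg sits: the paper first walks a distance $\le C\delta r_0$ along $g(\partial W)$ from $\u w$ to a point $\u w'$ from which a vertical segment reaches $\varphi(\partial Q)$ at some $\u x'$ (this is where \cite[Lemma~3.5]{C} is invoked), and then takes the long leg along $\varphi(\partial Q)$ from $\u x'$ to $\u x$; you instead run the long leg along an offset of $\gamma$ and defer the crossing to the $\u x$ end. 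Both decompositions give the bound, but your ``matching hop to $\u x$'' then requires precisely the same care that \cite[Lemma~3.5]{C} supplies on the $\u w$ side---a direct segment from the offset curve to $\u x$ need not stay in the region, so you must first locate a nearby point from which the crossing is unobstructed and then travel along $\varphi(\partial Q)$ (at cost $\le C\delta r_0$ by \eqref{bound1}). Two small slips: the bounded part of $\er^2\setminus(\varphi(\partial Q)\cup g(W))$ is a \emph{single} annular component, not two strips (the end caps near $\u P$ and $\u O$ join the ``upper'' and ``lower'' sides); and your final ``upgrade'' is unnecessary, since $d|w-x|+C\delta r_0\le(d+1)|w-x|+C\delta r_0$ trivially.
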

		\begin{proof}[Proof of Lemma~\ref{PolylinesLemma}]
			For simplicity of the argument let us assume that $x$ and $w$ are points on the top of $W$ and $Q$ respectively. In the case that the points lie on the bottom or on a side that is very close to being vertical then the argument is the basically the same and we emphasize the difference when it is relevant.
			
			We start by defining a tentative polyline that we later alter to ensure injectivity. By \cite[Lemma 3.5]{C} it suffices to follow $g(\partial W)$ for a distance of at most $C\delta r_0$ till we find a point $\u{w}' \in g(W)$ such that the that the vertical segment going upwards (downwards for points on the bottom of $W$ resp. $Q$ and either case holds for points on sides very close to vertical) from $\u{w}'$ does not intersect $g(W)$ but only $g(\partial Q)$ at a point, which we call $\u{x}'$. The distance from $\u{x}'$ to $\u{x}$ along $g(\partial Q)$ is estimated from above by $|\u{x} - \u{x}'| + \delta r_0$ using \eqref{bound1}. But $|\u{x} - \u{x}'| \leq d|w-x| + C\delta r_0$. Therefore there is a polyline from $\u{x}$ to $\u{w}$ in the closure of the interior of $g(\partial Q)$ not intersecting $g(W^{\circ})$ with length $Cd\delta r_0$ with $C$ independent on the choice of $\u{x}$ and $\u{w}$. The polyline can be moved inside the bounded component of $\er^2 \setminus (\varphi(\partial Q) \cup g(W))$ except for the endpoints making it only $\epsilon$ longer with $\epsilon$ as small as we like.
		\end{proof}
		
		It is a standard technique (used to show the uniqueness of shortest paths in $\er^2$) that allows us to show that if two polylines from Lemma~\ref{PolylinesLemma} intersect then one can redefine them so that they do not touch and the length estimate still holds.
		
		Now we define $\varphi_i$ as the constant speed parametrization of $p_{x,w}$ on each `radial' segment of a `square' $U_i$. Then we calculate that
		$$
		D_{\tau} \varphi_i(t) \leq \begin{cases}
		2d \quad &t \in \partial Q\\
		2d \quad &t \in \partial W\\
		C(d+1) \quad &t \text{ on radial segments of } \partial U_i
		\end{cases}
		$$
		because $\mathcal{H}^1(p_{x,w}) \leq  C (d+1)\delta r_0$ and the length of the radial segments are approximately $C\delta r_0$. Then applying Theorem~\ref{extension} on each $U_i$ we define $g$ on $Q\setminus W$ such that $\|Dg\|_{L^{\infty}(Q)} \leq C(d+1)$. The fact that $g = \varphi$ on $\partial Q$ is immediate from our definition. 
	\end{proof}
	
	\begin{lemma}\label{Approx}
		Let $X(\Omega)$ be a r.i. BFS space  such that $$\lim_{t\rightarrow 0} \varphi_{X(\Omega)}(t)=0.$$ Let $\hat{f}\in W^1X(\Omega,\mathbb{R}^2)$ be a locally finite piece-wise affine homeomorphism. Then for every $\varepsilon>0$ there exists a diffeomorphism $\tilde{f}$ such that 
		$$
		\|D\tilde{f}-D\hat{f}\|_{X(\Omega)}<\varepsilon
		$$
		and
		$$
		\|\tilde{f}-\hat{f}\|_{L^{\infty}(\Omega)}<\varepsilon.
		$$
		Moreover, if $\hat{f}$ is continuous up to the boundary of $\Omega$, then $\tilde{f}$ can be chosen to be continuous up to the boundary of $\Omega$ and $\tilde{f}= \hat{f}$ on $\partial \Omega$.
		
	\end{lemma}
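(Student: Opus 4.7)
The plan is to apply the Mora-Corral--Pratelli diffeomorphic approximation theorem \cite{MP} locally around the 1-skeleton of the triangulation supporting $\hat f$, and to use the hypothesis $\lim_{t\to 0}\varphi_X(t)=0$ to control the $W^1X$ error via the fundamental function.

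Write $\Omega=\bigcup_{k\in\mathbb{N}}K_k$ as an exhaustion by compact sets with $K_k\subset K_{k+1}^\circ$ and $K_0=\emptyset$, and set $A_k:=K_k\setminus K_{k-1}^\circ$. Because $\hat f$ is locally finite piece-wise affine, only finitely many triangles meet each $A_k$, so $M_k:=\|D\hat f\|_{L^\infty(A_k)}<\infty$. Let $\Sigma$ denote the (locally finite) 1-skeleton of the triangulation, and fix a parameter $\rho_k\in(0,1)$ to be chosen. Inside a tubular neighborhood $U_k\subset A_k^\circ$ of $\Sigma\cap A_k$ with $\mathcal{L}^2(U_k)\le\rho_k$, we can arrange the $U_k$ to be pairwise disjoint and, if $\hat f$ is continuous up to $\partial\Omega$, disjoint from $\partial\Omega$. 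Apply \cite{MP} inside each $U_k$ to produce a $C^\infty$ diffeomorphism that coincides with $\hat f$ outside a subset $E_k\subset U_k$ and whose derivative is bounded by $CM_k$ on $E_k$ (with $C$ the universal constant from \cite{MP}). Since $\hat f$ is already affine, hence smooth, off $\Sigma$ and each modification is supported in a disjoint open set strictly interior to $\Omega$, the patched map $\tilde f$ is a global diffeomorphism of $\Omega$, and $\tilde f=\hat f$ on $\partial\Omega$ whenever applicable.

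For the derivative estimate, $D\tilde f-D\hat f$ is supported in $E:=\bigcup_k E_k$ with $|D\tilde f-D\hat f|\le (C+1)M_k$ on $E_k$, hence by property (P2) and the definition of $\varphi_X$,
\[
\|D\tilde f-D\hat f\|_{X(\Omega)}\le\sum_k\|(D\tilde f-D\hat f)\chi_{E_k}\|_{X(\Omega)}\le (C+1)\sum_k M_k\,\varphi_X(|E_k|).
\]
Since $\varphi_X(t)\to 0$ as $t\to 0^+$, we can choose $\rho_k$ small enough that $(C+1)M_k\,\varphi_X(\rho_k)<\varepsilon 2^{-k}$; summing the geometric series yields $\|D\tilde f-D\hat f\|_{X(\Omega)}<\varepsilon$. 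For the $L^\infty$ bound, shrink $\rho_k$ further so that $\mathrm{diam}(U_k)<\varepsilon 2^{-k}/(C M_k+1)$; then $\|\tilde f-\hat f\|_{L^\infty(U_k)}\le CM_k\,\mathrm{diam}(U_k)<\varepsilon 2^{-k}$ on each $U_k$, and $\tilde f=\hat f$ elsewhere.

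The main obstacle is purely quantitative: the local Lipschitz constants $M_k$ may blow up as $k\to\infty$, so one cannot use a single global choice of $\rho$. The locally finite structure of the triangulation gives $M_k<\infty$ for each fixed $k$, and the decay $\varphi_X(t)\to 0$ (equivalent to absolute continuity of the norm on bounded sets) lets us compensate by selecting each smoothing region with measure small enough relative to $M_k$, namely $|E_k|\le\varphi_X^{-1}\!\bigl(\varepsilon 2^{-k}/[(C+1)M_k]\bigr)$. Global smoothness across annuli $A_k$ is automatic because the $U_k$ are pairwise disjoint and $\tilde f=\hat f$ on their complement, where $\hat f$ is smooth away from the triangulation edges already handled inside a neighboring $U_k$.
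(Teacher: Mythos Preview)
Your overall strategy matches the paper's: invoke \cite{MP} to smooth $\hat f$ on arbitrarily small neighbourhoods of the $1$-skeleton, then bound $\|D\tilde f-D\hat f\|_X$ by (local Lipschitz constant)$\,\times\,\varphi_X$(measure of the modified set) and sum a geometric series. The paper simply applies \cite{MP} once globally and sums triangle by triangle, using that in \cite{MP} the sets $T_i\setminus Z_4^i$ can be shrunk independently; your annulus-by-annulus variant is not essentially different in spirit. However, two steps in your write-up do not go through as stated.

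First, the $L^\infty$ estimate via $\operatorname{diam}(U_k)$ is wrong. The set $U_k$ is a tubular neighbourhood of $\Sigma\cap A_k$, so its diameter is at least the diameter of $\Sigma\cap A_k$, which is comparable to $\operatorname{diam}(A_k)$ and cannot be made small by thinning the tube. What you actually need is the \emph{width} of the tube: for $x\in E_k$ there is $y\in\partial E_k$ with $|x-y|$ bounded by the tube width, and then $|\tilde f(x)-\hat f(x)|\le 2CM_k|x-y|$. Alternatively (and this is what the paper does), simply quote the $L^\infty$ closeness directly from \cite{MP}, Theorem~A.

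Second, your disjoint neighbourhoods $U_k\subset A_k^\circ$ need not cover $\Sigma$. If an edge of the triangulation crosses $\partial K_k$, that portion of $\Sigma$ lies in $\partial A_k\cap\partial A_{k+1}$ and hence in no $A_j^\circ$, so it is not contained in any $U_j$; the patched map then fails to be smooth there. This is repairable: since the triangulation is locally finite, one can choose the exhaustion so that each $\partial K_k$ avoids $\Sigma$, after which $\Sigma\cap A_k$ is a compact subset of $A_k^\circ$ and the construction proceeds. You should state this explicitly. The paper sidesteps the issue entirely by applying \cite{MP} once on all of $\Omega$ and indexing the error by triangles rather than by annuli.
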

	\begin{proof}
		We denote by $\hat{f}$ the countably and locally-finite piece-wise homeomorphism and by $\tilde{f}$ a diffeomorphism constructed as in \cite{MP}. The claim that
		$$
		\|\tilde{f}-\hat{f}\|_{L^{\infty}(\Omega)}<\varepsilon
		$$
		is part of the claim of Theorem~A in \cite{MP}. We only need to show that the arguments in \cite{MP} extend to handling the norm $\| \cdot\|_{X(\Omega)}$ instead of $\| \cdot\|_{L^p(\Omega)}$ for the derivatives. We do not focus on results for the inverse or the determinant.
		
		The construction from \cite{MP} works on a small neighbourhood of the the edges and vertices of each triangle. Around edges of triangles (in \cite{MP} referred to as $Z_3$), but distant from the vertices it suffices to take an appropriate, smooth convex combination of the two affine maps, which meet at the edge. This gives a diffeomorphism on a (small tubular) neighbourhood of the edge, which coincides with $\hat{f}$ away from the edge.
		
		On small disks close to vertices one does two separate steps. One works on an outer annulus (in \cite{MP} referred to as $Z_2$) and a disk inside (in \cite{MP} referred to as $Z_1$). Assume that we have already smoothed close to the edges ending at the vertex. On the outer annulus one does a convex combination (with respect to $r$) with a smooth map, which expressed in polar coordinates $(r,\theta)$ has the same `argument' $\theta$ as the piecewise affine map (after smoothing near the edges) but brings all the points closer to the vertex in such a way that it sends circles onto circles. On the disk inside the outer annulus we smoothly transition to a small multiple of identity. This is possible since circles are sent to circles and it suffices to appropriately rotate the circles and smoothly take the angular speed (of the map expressed in polar coordinates in the preimage and image) to 1. In this case we are equal to a translation plus a small multiple of the identity near each vertex.
		
		In each case the neighbourhoods of the edges ($Z_3$) and vertices ($Z_1, Z_2$) can be made as small as we like. On the rest of the space (in \cite{MP} referred to as $Z_4$) we have $\tilde{f}=\hat{f}$. To reiterate \cite{MP} we quote the final paragraph of their Proof of Theorem~A they say ``Finally, the set [$\{y : \tilde{f}(y) \neq \hat{f}(y)\}$] can be made as small as we wish, by decreasing the constants $\delta_a$, $\delta_b$ and $\delta_c$ as needed''.
		
		The mapping $\hat{f}$ is piecewise affine on $\Omega$, which means there is a set of triangles $\left\{T_i\right\}_{i\in I}$ covering $\Omega$ and $\hat{f}$ is affine on each $T_i$. For each $T_i$, by construction we have that there exists a constant $M_i = 13 \max_{t\in J(x)} |D\hat{f}(t)|$, where $J(x)$ is the union of $T_i$ containing $x$ with all triangles $T_j$ which intersect $T_i$ (take $q =1$ in \cite[Theorem A]{MP}) such that
		$$
		\|D \hat{f} \|_{L^\infty(T_i)}\leq M_i, \quad \|D\tilde{f}\|_{L^\infty(T_i)}\leq M_i.
		$$
		Obviously,
		$$
		\| D \hat{f}- D\tilde{f}\|_{X(\Omega)}\leq \sum_{i\in I}\big \|\left(  D\hat{f}- D\tilde{f}\right)\chi_{T_i}\big\|_{X(\Omega)}
		$$	
		Moreover, for each $i\in I$
		$$
		|D\hat{f}- D\tilde{f}|\chi_{T_i} \leq \left( |D\hat{f}|+ |D\tilde{f}| \right)\chi_{T_i\setminus Z^i_4}\leq 2 M_i \chi_{T_i\setminus Z^i_4}
		$$
		where we denote by $Z^i_4$ the intersection of the set where $\hat{f} = \tilde{f}$ (in \cite{MP} referred to as $Z_4$) with $T_i$.
		
		Therefore
		$$
		\big\| \left(  D\hat{f}- D\tilde{f}\right)\chi_{T_i}\big\|_{X(\Omega)} \leq 2M_i\,\varphi_{X(\Omega)} \left(\mathcal{L}^2(T_i \setminus Z^i_4)\right).
		$$
		We can assume that $\mathcal{L}^2(T_i \setminus Z^i_4)$ is so small that
		$$
		\varphi_{X(\Omega)} \left(\mathcal{L}^2(T_i \setminus Z^i_4)\right) < \frac{\varepsilon}{2M_i} 2^{-i}
		$$
		and thus we conclude
		$$
		\| D\hat{f}- D\tilde{f}\|_{X(\Omega)}\leq \varepsilon.
		$$
	\end{proof}

	\subsection{Preliminary results on grids and approximations on grids\label{SuperBoring}}
	
	Let us remind the reader that by \textit{dyadic squares} we mean the family $\mathcal{D}=\bigcup_{k\in\mathbb{Z}}\mathcal{D}_k$, where
	$$
	\mathcal{D}_k =\{Q((2^{k}, 2^k) + z 2^{k+1}, 2^{k}); z\in \mathbb{Z}^2\},
	$$
	where the square $Q((a,b),r) = [a-r,a+r]\times[b-r,b+r]$ is closed. Given $v\in \er^2$, by  $\mathcal{D}_k^v$ we denote
	$$
	\mathcal{D}_k^v=\{v+Q; Q\in\mathcal{D}_k\}
	$$
	
	For a domain $\Omega$ we say that an open set $G$ \textit{separates components of $\partial\Omega$} if any continuous curve connecting different components of $\partial\Omega$ has to intersect $G$. We say that the domain $\Omega\subset\mathbb{R}^2$ is \textit{finitely connected} if $\er^2\setminus\Omega$ has finite number of components.
	
	\begin{lemma}\label{Segregation}
		Let $\Omega \subset \er^2$ be a finitely connected bounded domain and let $f \in W^{1,1}_{\text{loc}}(\Omega, \er^{2})$ be a homeomorphism. Then there exists a strictly increasing sequence of sets $\Omega_k\Subset\Omega_{k+1}\Subset\Omega$ such that $\Omega = \bigcup_k\Omega_k$, also $\Omega_k$ separates components of $\partial \Omega$ and $\partial\Omega_k$ is piece-wise linear and parallel to coordinate axes and $\partial \Omega_{k}$ has the same number of components as $\partial\Omega$.
		
		Further, for every $k\in\mathbb{N}$ and $\epsilon_k, \delta_k > 0$ and any set $A_k$ with $\mathcal{L}^2(A_k)<\tfrac{\delta_k}{32}$ there exists an $m_k \in \en$, and $v_k \in Q(0,2^{-m_k-1})$ and a collection of $K_k$ shifted dyadic squares $\{{Q}_i^k= Q(c_i+v_k, 2^{-m_k}) \in \mathcal{D}_{-m_k}^{v_k}\}_{i=1}^{K_k}$ such that
		\begin{enumerate}
			\item[$i)$] $Q_i^k\Subset \Omega_k \setminus \Omega_{k-1}$, and $\bigcup_{i=1}^{K_k} {Q}_i^k$ has no holes (i.e. in each component of $\Omega_k \setminus \Omega_{k-1}$  there are precisely 2 components of $ \Omega_k \setminus \bigcup_{i=1}^{K_k} {Q}_i^k$),
			\item[$ii)$] it holds that
			$$
			\mathcal{L}^2\Big(\Omega_k\setminus \Big[\Omega_{k-1}\cup \bigcup_{i=1}^{K_k} {Q}_i^k\Big]\Big)  <\delta_k,
			$$
			\item[$iii)$] for each $1\leq j \leq K_k$ the square ${Q}_j^k$ shares at least two of its sides with other squares of $\{{Q}_i^k\}_{i=1}^{K_k}$,
			\item[$iv)$] the set $\partial\bigcup_{i=1}^{K_k}{Q}_i^k$ consists of segments and there is a one-to-one correspondence between the endpoints of these segments and vertices of $\partial(\Omega_k\setminus \Omega_{k-1})$ i.e. for every $X$ vertex of $\partial(\Omega_k\setminus \Omega_{k-1})$ there is exactly one vertex of $\partial\bigcup_i{Q}_i^k$ in $B(X,2^{4-m_k})$ and vice versa,
			\item[$v)$] for every $x\in \partial\bigcup_{i=1}^{K_k}{Q}_i^k$ we have
			$$
				6 \cdot 2^{-m_k}\leq \dist_{\infty}(x, \partial(\Omega_k\setminus \Omega_{k-1})) \leq 8 \cdot 2^{-m_k}\,,
			$$
			\item[$vi)$] there exists a set of indexes $B_k$ such that $\mathcal{L}^2(\bigcup_{i\in B_k} {Q}^k_{i}) < \delta_k$ and moreover for every $i \notin B_k$,
			$$
			c_i+v_k \notin A_k
			$$
			and
			\begin{equation}\label{ThisOne}
				\begin{aligned}
			\|f(x) - f(c_{i}+v_k) - Df(c_{i}+v_k)(x-c_{i}-v_k) \|_{L^{\infty}(2Q_{i} ^k)} &< \epsilon_k 2^{-m_k-1},\\
			\oint_{2Q_{i}^k} |Df(x)-Df(c_{i}+v_k)| \dx\mathcal{L}^2(x) &< \tfrac{\epsilon_k}{4},\\
			\tfrac{1}{\mathcal{L}^2(Q_i^k)}\|(Df - Df(c_i  +v_k))\chi_{2Q_i^k}\|_{X(\Omega)} &< \epsilon_k
			\end{aligned}
			\end{equation}
			\item[$vii)$] for every $i=1,\dots, K_k$ we have $\diam(f(2Q_i^k))\leq\epsilon_k$.
		\end{enumerate}
	\end{lemma}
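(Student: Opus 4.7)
My plan is first to construct the exhaustion $\{\Omega_k\}$ and then, for each $k$, to choose the grid scale $m_k$ large and the shift $v_k$ by an averaging argument. Since $\Omega$ is bounded and finitely connected, for a fast--growing sequence $N_k \to \infty$ I let $\Omega_k^0$ be the interior of the finite union of closed dyadic squares of side $2^{-N_k}$ compactly contained in $\Omega$, and form $\Omega_k$ by filling in any bounded components of $\er^2 \setminus \Omega_k^0$ not corresponding to genuine holes of $\Omega$. The resulting $\Omega_k \Subset \Omega_{k+1}$ exhaust $\Omega$ and have axis-parallel polygonal boundaries separating the components of $\partial \Omega$, with the same number of components as $\partial \Omega$.

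Next, fix $k$ and take $m_k \in \en$ large, with $2^{-m_k}$ much smaller than $\dist(\overline{\Omega_{k-1}}, \partial \Omega_k)$ and than the minimum edge length of $\partial(\Omega_k \setminus \overline{\Omega}_{k-1})$. For $v \in V := Q(0, 2^{-m_k-1})$, I take as candidate squares those $Q(c_i+v, 2^{-m_k}) \in \mathcal{D}_{-m_k}^v$ whose $\dist_\infty$-distance to $\partial(\Omega_k \setminus \overline{\Omega}_{k-1})$ lies in $[6 \cdot 2^{-m_k}, 8 \cdot 2^{-m_k}]$, i.e.\ a two-squares-thick middle strip inside each component of $\Omega_k \setminus \overline{\Omega}_{k-1}$. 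Its complement within the annulus is a pair of tubular bands of width $O(2^{-m_k})$ with total area $\lesssim 2^{-m_k}\,\mathcal{H}^1(\partial(\Omega_k\setminus\overline{\Omega}_{k-1}))$, giving (i) and (ii) for $m_k$ large. Each middle-strip square shares at least two full sides with neighbours, giving (iii); and since both the grid lines and the polygon $\partial(\Omega_k\setminus\overline{\Omega}_{k-1})$ are axis-parallel, the resulting staircase boundary of $\bigcup_i Q_i^k$ places exactly one corner within $2^{4-m_k}$ of each polygon vertex and vice versa, delivering (iv) and the sandwich estimate (v).

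The heart of the proof is then the averaging choice of $v_k$. Because the open squares $c_i + V$ (over all $i$) are pairwise disjoint, Fubini gives, for any measurable $E\subset \er^2$,
$$
\int_V \sum_{i} \mathbf{1}_{\{c_i+v \in E\}}\,\mathcal{L}^2(Q_i^k)\dx v \;\leq\; \mathcal{L}^2(Q_i^k)\,\mathcal{L}^2(E) \;=\; 4\,\mathcal{L}^2(V)\,\mathcal{L}^2(E).
$$
I apply this with $E$ equal, in turn, to $A_k$, and to the failure sets of the three estimates in \eqref{ThisOne} (tested at radius $2\cdot 2^{-m_k}$). Each failure set has measure tending to $0$ as $m_k \to \infty$: for the first estimate, by continuity of $f$ and of $Df$ on the a.e.-Lebesgue set of $Df$; for the second, by the Lebesgue differentiation theorem applied to $Df \in L^1_{\loc}$; for the third, directly by the Lebesgue point property of $Df$ in $X$ granted by Definition~\ref{LebesgueProperty}, which holds almost everywhere. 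Consequently, for $m_k$ large the average over $v \in V$ of the total area of squares that fail \emph{any} of the four conditions drops below, say, $\delta_k/4$, and Chebyshev produces a single $v_k \in V$ yielding (vi). Property (vii) is then obtained by one last increase of $m_k$, using the uniform continuity of $f$ on the compact set $\overline{\Omega_k} \subset \Omega$.

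The main obstacle I anticipate is the geometric boundary matching in (iv)--(v): ensuring that the axis-parallel staircase of $\partial \bigcup_i Q_i^k$ reproduces the vertex combinatorics of $\partial(\Omega_k\setminus\overline{\Omega}_{k-1})$. Once $2^{-m_k}$ is much finer than the vertex spacing of the polygon and the shift $v_k$ is constrained to $V$, this reduces to a local case check at each polygon vertex; tedious but routine. The analytical substance is concentrated entirely in the averaging step above, where the Lebesgue point property of $X$ enters crucially.
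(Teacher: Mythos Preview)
Your overall strategy---construct the polygonal exhaustion $\{\Omega_k\}$, choose the scale $m_k$ large, and then select the shift $v_k$ by a Fubini/averaging argument over $V=Q(0,2^{-m_k-1})$---is exactly the paper's, and the averaging inequality you write down for (vi) is precisely the mechanism used there.

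However, your description of the collection $\{Q_i^k\}$ misreads condition (v) and, as written, makes (ii) fail. You take the squares ``whose $\dist_\infty$-distance to $\partial(\Omega_k\setminus\overline\Omega_{k-1})$ lies in $[6\cdot 2^{-m_k},8\cdot 2^{-m_k}]$, i.e.\ a two-squares-thick middle strip''. But (v) constrains only points of $\partial\bigcup_i Q_i^k$, not the squares themselves: the squares must fill \emph{almost all} of $\Omega_k\setminus\Omega_{k-1}$---that is exactly the content of (ii)---leaving only a thin corridor of width $\approx 7\cdot 2^{-m_k}$ along each boundary component. With your two-square-thick strip, the complement in the annulus is not ``a pair of tubular bands of width $O(2^{-m_k})$'' but essentially all of $\Omega_k\setminus\Omega_{k-1}$, whose width is fixed \emph{before} $m_k$ is chosen and is not $O(2^{-m_k})$; hence $\mathcal L^2\big(\Omega_k\setminus[\Omega_{k-1}\cup\bigcup_iQ_i^k]\big)$ does not tend to $0$ as $m_k\to\infty$ and (ii) cannot be obtained. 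The correct construction, carried out in the paper, is to take \emph{all} shifted dyadic squares contained in $\Omega_k\setminus\Omega_{k-1}$ and then peel off a bounded number of outermost layers so that the \emph{boundary} of the remaining union lands in the distance band of (v); your subsequent remarks on (iii), (iv), and the area estimate for the leftover corridor then apply verbatim to this corrected collection. Once this is fixed, your argument coincides with the paper's.
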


	\begin{proof}
		
		\begin{figure}
			\centering
			\includegraphics[width=10cm]{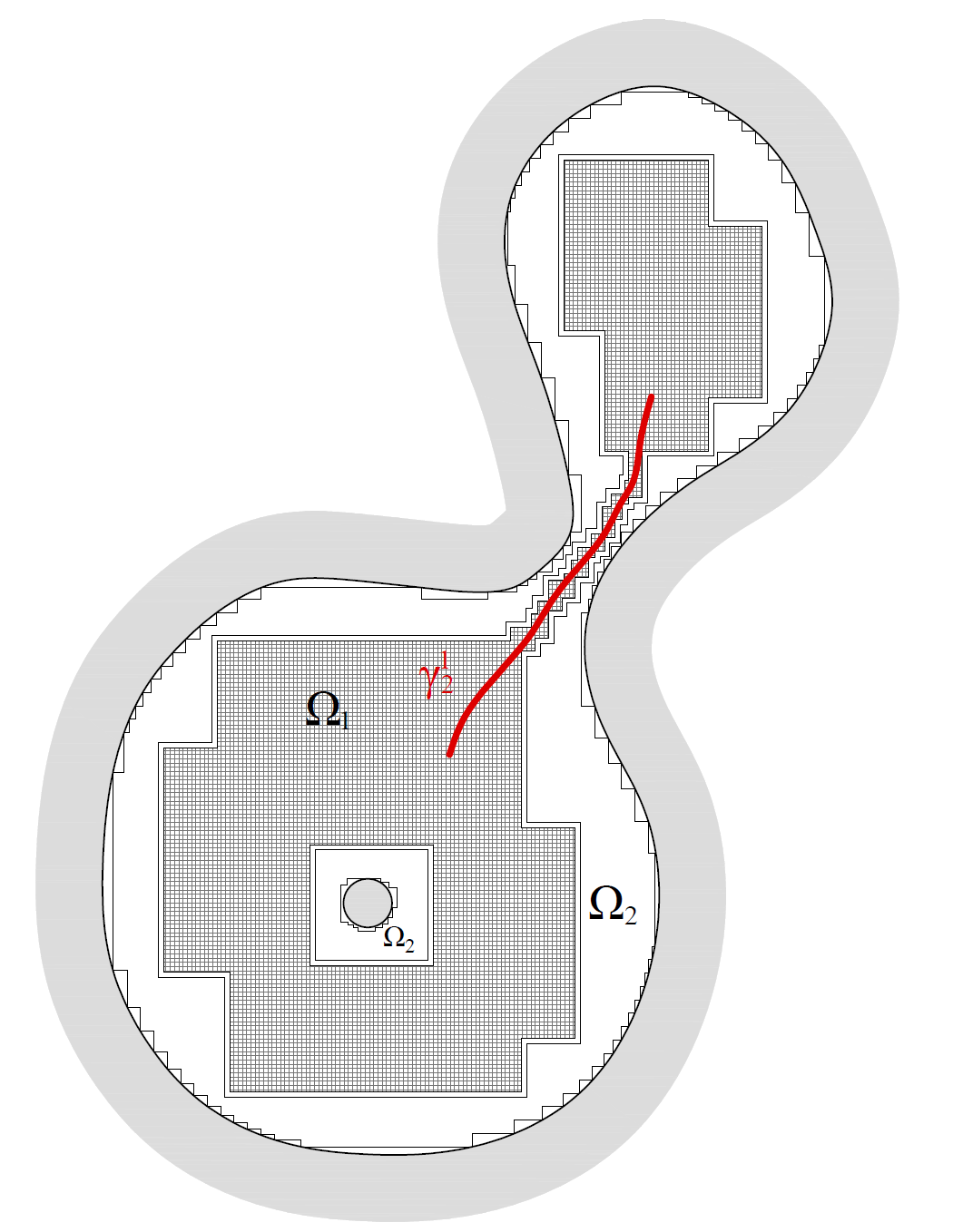}
			\caption{The sets $\Omega_1$ and $\Omega_2$ inside $\Omega$ a doubly connected domain. The squares $Q_i^1$ are also depicted and the curve $\gamma_2^1$.}\label{fig:Seg}
		\end{figure}
		Our choice of the sets $\Omega_k$ is inductive. We start by choosing $\Omega_1$. After we have done that we assume that we have an $\Omega_{k-1}$ and its corresponding parameter $l_{k-1}$ and then choose $\Omega_k$ based on the previous set.
		
		For every $l \in \en$ we define $\mathcal{W}_{-l}$ as the set of squares $Q\in \mathcal{D}_{-l}$ such that $16Q\subset \Omega$. As $l$ tends to infinity these sets fill $\Omega$ and because $\Omega$ is finitely connected there exists an $l_0$
		such that the set
		$$
		\tilde{\Omega}_1=\bigcup_{Q\in\mathcal{W}_{-l_0}}Q
		$$
		separates components of $\partial\Omega$ (i.e.\ any continuous curve connecting different components of $\partial\Omega$ intersect $\tilde{\Omega}_1$). The set $\tilde{\Omega}_1$ is not necessarily connected. If $\tilde{\Omega}_1$ is connected, we set $\Omega_1=\tilde{\Omega}_1$. In the opposite case, we argue as follows. Assume that there are $N_1$ components, we choose a point $y_n^1$ in each of the components. Each pair $y_1^1$ and $y_n^1$, $n=2,\dots, N_1$ are path-wise connected in $\Omega$. Call $\gamma_n^1$ a path connecting $y_n^1$ with $y_1^1$ inside $\Omega$. Now set
		$$
		l_1:=\min\Big\{l\in\mathbb{N}\ :\ l>l_0\text{ and } \gamma_{j}^1\subset \bigcup\limits_{\mathcal{W}_{-l}}Q\text{\quad for }2\leq j \leq {N}_1\Big\}.
		$$
		Call $\tilde{C}_1$ the union of all $Q$ such that
		$$
		Q\in \Big\{\tilde{Q}\in \mathcal{D}_{-l_1}\ :\ 4\tilde{Q}\cap \gamma^1_j \neq \emptyset\text{\quad for }2\leq j \leq {N}_1\Big\}
		$$
		then we define the set 
		$$
		C_1 = \tilde{\Omega}_1\cup \tilde{C}_1.
		$$
		Also we call $D_1$ the union of the squares $Q$ such that
		$$
		Q\in\Big\{\tilde{Q}\ :\ \tilde{Q}\in \mathcal{D}_{-l_1} \text{ and } \tilde{Q} \text{ is disconnected from } \partial \Omega \text{ by } C_1\Big\}.
		$$
		By $A^{\circ}$ we denote the topological interior of $A$. Note that the set
		$$
		\Omega_1:=(C_1\cup D_1)^{\circ}
		$$
		is a finitely connected domain and every component of $\er^2\setminus\Omega_1$ contains exactly one component of $\partial \Omega$.\\

		Continuing to the induction step, let us suppose that we have defined $l_{k-1}$ and $\Omega_{k-1}$. Set
		$$
		\tilde{\Omega}_k = \bigcup_{Q\in\mathcal{W}_{-l_{k-1} - 5}}Q.
		$$
		If $\tilde{\Omega}_k$ is not connected then continue as before. Call $N_{k}$ the number of components of $\tilde{\Omega}_k$ and let $\gamma_n^k$, $n=2,3,\dots, N_k$ be paths in $\Omega$ connecting each component of $\tilde{\Omega}_k$ to one of the given components. We find an $l_k > l_{k-1} + 5$ such that we can cover each $\gamma_n^k$ with dyadic squares from $\mathcal{W}_{-l_k}$. We call $\tilde{C}_k$ the union of squares $Q$ such that
		$$
		Q \in \Big\{\tilde{Q}\in \mathcal{D}_{-l_k}\ :\ 4\tilde{Q}\cap \gamma^k_j \neq \emptyset\text{\quad for }2\leq j \leq {N}_k\Big\}
		$$
		and use this to define $C_k$ as follows
		$$
		C_k = \tilde{\Omega}_{k}\cup \tilde{C}_k.
		$$
		Further we define $D_k$ as the union of the squares $Q$ such that
		$$
		Q\in \Big\{Q\in \mathcal{D}_{-l_k}\ :\ Q \text{ is disconnected from } \partial \Omega \text{ by } C_k\Big\}
		$$
		for squares that were surrounded by $C_k$ in order to define
		$$
		\Omega_k:=C_k\cup D_k.
		$$
		Thus we define $\Omega_k$, $k\in \en$ inductively. The set $\Omega_k$ is a union of some squares in $\mathcal{D}_{-l_k}$ with distance to the boundary of at least $2 \cdot 2^{-l_k}$. On the other hand $\tilde\Omega_{k+1}$ contains the union of all squares in $\mathcal{W}_{-l_k-5}$ and this contains all points with distance to the boundary of $2^{-l_k}$ and therefore is a strictly larger set than $\mathcal{W}_{-l_k}$. Hence it is easy to deduce that $\Omega_k\Subset\tilde\Omega_{k+1}\subset\Omega_{k+1}$ and so this decomposition satisfies our requirements for $\Omega_k$.
		
		Now let us choose $k\in \en$ and construct the collection of cubes $\{Q_i^k\}$ in the claim. Either $\Omega$ is simply connected and $\Omega_k$ has one component, or $\Omega$ is multiply connected and $\Omega_1$ has one component and for all $k\geq 2$ the number of components of $\Omega_k\setminus \Omega_{k-1}$ equals the number of components of $\partial \Omega$. In the latter case it suffices to consider each component separately and so with respect to this fact we may assume that $\Omega_k\setminus \Omega_{k-1}$ is connected.
		
		We have $k$ fixed; for each $m \in \en$ we call
		$$
		{U}_{m}^k = \{Q \in \mathcal{D}_{-m}\ :\ Q \subset \Omega_k\setminus \Omega_{k-1}\}
		$$
		where $m \geq l_k+8$. Firstly notice that the map $f$ is uniformly continuous on $\Omega_k$ and so for $\epsilon_k$ there exists an $m'$ such that $\diam f(Q)< \epsilon_k$ for all $m\geq m'$ and $Q\in {U}_{m}^k$. By this condition we get point $vii)$.
		
		Calling
		$$
		D = \mathcal{H}^1(\partial(\Omega_k\setminus\Omega_{k-1})) + \#\{\text{vertices of } \partial(\Omega_k\setminus\Omega_{k-1})\}
		$$
		we get the existence of an $m''$ such that
		\begin{equation}\label{IncyWincy}
		2^{3-m}D \leq \delta_k
		\end{equation}
		for all $m\geq m''$. This will be crucial for getting $ii)$.
		
		Our next step will be to shift the squares to guarantee that $vi)$ holds, then we exclude the squares too close to $\partial(\Omega_k \setminus \Omega_{k-1})$ to give $iv)$ and $v)$. The other properties will follow quickly.
		
		Almost every $ a\in \Omega_k\setminus \Omega_{k-1}$ is a point of differentiability of $f$ (for example see~\cite[Lemma~A.28]{HK}) and a Lebesgue point of the derivative of $f$ in $X$ (it follows from \eqref{RISANDWICH} that it is also a Lebesgue point of $Df$ in classical sense). We have
		\begin{equation}\label{LPOD}
		\begin{aligned}
		I_{m}(a):={}&2^{m}\|f(x) - f(a) - Df(a)(x-a) \|_{L^{\infty}(Q(a,2^{1-m}))}\\
		&\qquad + \oint_{Q(a,2^{1-m})} |Df(x)-Df(a)| \dx\mathcal{L}^2(x)\\ &\qquad+ \frac{1}{\mathcal{L}^2(Q(a,2^{1-m}))}\|\left(Df-Df(a)\right)\chi_{2Q(a,2^{1-m})}\|_{X(\Omega)}\longrightarrow 0\text{\quad as } m\to \infty 
		\end{aligned}
		\end{equation}
		for almost every $a\in \Omega_{k}\setminus\Omega_{k-1}$ (because $\Omega_k\Subset \Omega$ we may assume that $Q(a,2^{1-m})\subset \Omega$ for all $m$ large enough). Therefore there exist an $m'''$ and a set $E_k$ such that for all $m\geq m'''$
		$$
		\left\{I_m\geq \tfrac{\varepsilon_k}{4}\right\}\subset E_k \text{\quad  and\quad } \mathcal{L}^2(E_k)<\tfrac{\delta_k}{32}.
		$$
		Now we choose $m_k = \max\{l_k+8, m', m'', m'''\}$ and denote $U_{k} = U_{m_k}^k = \{\tilde{Q}_1^k, \tilde{Q}_2^k, \dots, \tilde{Q}_{O_k}^k\}$, where $\tilde{Q}_i^k = Q(c_i, 2^{-m_k})$. Define
		$$
		\psi(x) = \sum_{i=1}^{O_k} (\chi_{E_k}(x+c_i)+ \chi_{A_k}(x+c_i)) \text{\quad for } x\in Q(0,2^{-m_k-1})
		$$
		and then
		$$
		\begin{aligned}
		\int_{Q(0,2^{-m_k-1})} \psi \dx \mathcal{L}^2 &=  \sum_{i=1}^{O_k} \int_{Q(c_i,2^{-m_k-1})} (\chi_{E_k} + \chi_{A_k}) \dx \mathcal{L}^2\\
		& \leq   \int_{\Omega} (\chi_{E_k} + \chi_{A_k}) \dx \mathcal{L}^2\\
		&  \leq   \mathcal{L}^2(E_k) + \mathcal{L}^2(A_k) \\
		& \leq \frac{\delta_k}{16}.
		\end{aligned}
		$$
		Therefore
		\begin{equation}\label{INTPR}
		\oint_{Q(0,2^{-m_k-1})} \psi \dx \mathcal{L}^2 \leq \frac{\delta_k2^{2m_k}}{16} = \delta_k2^{2m_k-4}
		\end{equation}
		and therefore we can find a $v_k \in Q(0, 2^{-m_k-1})$ such that
		\begin{equation}\label{NotBad}
		\psi(v_k) \leq \delta_k 2^{2m_k-4}.
		\end{equation}
		
		Now, set
		$$
		a_i=c_i+v_k\quad \textup{and}\quad Q^{ k}_{ i}=Q(a_i,2^{-m_k}) =v_k+\tilde Q^k_i\,.
		$$
		Let 
		$$
		B_k:=\left\{i:I_{m_k}(a_i)\geq \frac{\varepsilon_k}{4}\right\}\cup\{i: a_i \in A_k\}
		$$
		be the set of indices of the bad squares. Now, note that since \eqref{NotBad} holds, we have $\card( B_k)\leq \delta_k 2^{2m_k-4}$ and thus
		$$
		\mathcal{L}^2\left(\bigcup\limits_{i\in B_k} Q^i_k\right)\leq \sum\limits_{i\in B_k}\mathcal{L}^2(Q^i_k)\leq 2^{2-2m_k}\delta_k 2^{2m_k-4}=\tfrac{1}{4}\delta_k.
		$$
		In summary we have the following, each square $Q(a_i, 2^{-m_k})$ either has $i\in B_k$ and then calculated above is $\mathcal{L}^2(\bigcup_{i\in B_k} {Q}^k_{i}) < \delta_k$, otherwise $i\notin B_k$ and in this case $a_i \notin A_k$ and $I_{m_k}(a_i) < \frac{\epsilon_k}{4}$. From $I_{m_k}(a_i)< \frac{\epsilon_k}{4}$ we easily get the inequalities in $vi)$.
		
		It holds that 
		$$
			\bigcup_{i=1}^{O_k}\tilde Q_i^k = \Omega_k \setminus \Omega_{k-1}\,,\qquad \bigcup_{i=1}^{O_k}Q_i^k = v_k+(\Omega_k \setminus \Omega_{k-1})\,.
		$$ 
		As $v_k \in Q(0, 2^{-m_k-1})$, we have $\|v_k\|_\infty\le 2^{-m_k-1}$ and only squares $Q_i^k$ in the most external layer may intersect $\partial (\Omega_k\setminus \Omega_{k-1})$. Now we exclude some of the outer squares to leave a small gap to $\partial (\Omega_k \setminus \Omega_{k-1})$. If necessary we change the indexing to get the set $\{Q_i^k; 1\leq i\leq \tilde{O}_k\}$ and none of the squares $Q_i^k$ intersects $\partial (\Omega_k\setminus\Omega_{k-1})$ for $1\leq i\leq \tilde{O}_k$. Moreover, (because $\diam_{\infty}Q_i^k = 2^{1-m_k}$) for arbitrary $x$ in the outer squares of $\{Q_i^k; 1\leq i\leq \tilde{O}_k\}$ we have that
		$$
		0 \leq \dist_{\infty}(x, \partial (\Omega_k\setminus \Omega_{k-1})) <2\cdot 2^{-m_k}.
		$$
		We exclude also this layer of outer squares. Now for the elements of the outer squares of the remaining system $\{Q_i^k; 1\leq i\leq \tilde{\tilde{O}}_k\}$ one has
		$$
		2\cdot 2^{-m_k} \leq \dist_{\infty}(x, \partial (\Omega_k\setminus \Omega_{k-1})) <4\cdot 2^{-m_k}.
		$$
		We repeat this excluding process two more times and we get the final set $\{Q_i^k; 1\leq i\leq K_k\}$ with
		$$
		6\cdot 2^{-m_k} \leq \dist_{\infty}(x, \partial (\Omega_k\setminus \Omega_{k-1}))<8\cdot 2^{-m_k}
		$$
		and so satisfies $v)$. There is only a corner of $\partial \bigcup_i^{K_k}Q_i^k$ near a corner of $\partial (\Omega_k\setminus \Omega_{k-1})$ because $m_k \geq l_k+8$ (which is $iv)$). Simultaneously, because the corners of $\partial \bigcup_i^{K_k}Q_i^k$ have distance greater than $32\cdot 2^{1-m_k}$, any single square of $\{Q_i^k\}$ contains at most one corner of $\partial \bigcup_i^{K_k}Q_i^k$, therefore no square has more than two external sides and so shares at least two sides with other squares of $\{Q_i^k\}$, which is $iii)$.

		It is obvious that $i)$ holds; the union of the squares in $U_k$ has no holes, the shifted squares are the same and no holes inside can be formed by removing squares from the edge. The set $\bigcup_{i=1}^{K_k}Q_i^k$ consists of all of the squares.
		
		The last point to check is $ii)$. By the choice of $m_k \geq m''$ we can estimate
		$$
		\mathcal{L}^2\Big(\Omega_k\setminus \Big[\Omega_{k-1}\cup \bigcup_{i=1}^{K_k} {Q}_i^k\Big]\Big) < \delta_k
		$$
		by \eqref{IncyWincy} and so we get $ii)$.
	\end{proof}

	The next lemma is needed to describe how to slightly move the vertices of a grid to obtain better boundary values. This result is proved in \cite{C}, but we give the proof for the reader's convenience.

	\begin{lemma}\label{GridLock}
		Let $f\in W^{1,1}_{\text{loc}}(\Omega; \mathbb R^2)$ be a homeomorphism and $k, \varepsilon_k, \delta_k$ be given numbers. Let $\left\{Q^k_i=Q^k_i(a_i ,2^{m_k})\right\}$ be a grid of squares in $\Omega_{k}\setminus \Omega_{k-1}$ determined by given numbers $\varepsilon_k, \delta_k$ in Lemma~\ref{Segregation}. Then there exists  $\left\{\Q_i^k\right\}$  a grid of quadrilaterals verifying $\frac{3}{4}Q^k_i \subset \Q_i^k \subset \frac{5}{4}Q^k_i$ and such that $f$ is absolutely continuous on each side of $\partial \Q^k_i$ and for all $i$ it holds that 
		\begin{equation}\label{1507}
		\oint_{\partial \Q_i^k} |D_{\tau} f| \dx\mathcal{H}^1\leq C\oint_{2  Q_i^k} |Df| \dx \mathcal{L}^2.
		\end{equation}
		Further for those $i\notin B_k$ it holds that
		\begin{equation}\label{15071}
		\oint_{\partial \Q_i^k} |D_{\tau} f- Df(a_i) \tau| \dx\mathcal{H}^1 \leq C\varepsilon_k.
		\end{equation}
		In both cases above, $C>0$ is an absolute constant.
		
	\end{lemma}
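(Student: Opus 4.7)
The plan is to perturb the axis-aligned grid $\{Q_i^k\}$ by selecting shifts $s_j\in[-2^{-m_k-2},2^{-m_k-2}]$ for each vertical grid line and $t_\ell\in[-2^{-m_k-2},2^{-m_k-2}]$ for each horizontal grid line, and then to let $\Q_i^k$ be the resulting rectangular quadrilateral. Since each side moves by at most $r/4=2^{-m_k-2}$, where $r=2^{-m_k}$, the inclusion $\tfrac{3}{4}Q_i^k\subset\Q_i^k\subset\tfrac{5}{4}Q_i^k$ is automatic. The shifts must be chosen to ensure three things simultaneously: (a) $f$ is absolutely continuous on every side of every $\partial\Q_i^k$, (b) the bound \eqref{1507} holds, and (c) the sharper bound \eqref{15071} holds whenever $i\notin B_k$. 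Absolute continuity comes for free from the ACL characterisation of $W^{1,1}_{\loc}$: for almost every line parallel to a coordinate axis the restriction of $f$ is absolutely continuous, so the set of shifts at which the perturbed line fails the AC property is an $\mathcal{H}^1$-null set, which we discard.

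For the integral estimates I would use Fubini and Chebyshev. Let $\delta=2^{-m_k-2}$, fix a vertical grid line $V_j$ and denote by $\sigma_i^s$ the perturbation of a side of $Q_i^k$ lying on the shifted line $V_j^s$. Let $S_{i,j}$ be the rectangular strip of width $2\delta$ centred on the side, so that $S_{i,j}\subset 2Q_i^k$ for every admissible shift. Fubini gives
\[
\int_{-\delta}^{\delta}\!\int_{\sigma_i^s}|Df|\,d\mathcal{H}^1\,ds = \int_{S_{i,j}}|Df|\,d\mathcal{L}^2 \le \int_{2Q_i^k}|Df|\,d\mathcal{L}^2,
\]
so the $s$-average of the boundary integral is at most $Cr\oint_{2Q_i^k}|Df|$. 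Chebyshev's inequality then shows that outside a small-measure set of shifts one has the individual bound $\int_{\sigma_i^s}|Df|\le Cr\oint_{2Q_i^k}|Df|$; summing over the four sides of $\Q_i^k$ and dividing by its perimeter yields \eqref{1507}. For \eqref{15071} I would repeat the argument verbatim with $|Df|$ replaced by $|Df-Df(a_i)|$, invoking the Lebesgue-point bound $\oint_{2Q_i^k}|Df-Df(a_i)|<\varepsilon_k/4$ contained in \eqref{ThisOne}, which is valid precisely when $i\notin B_k$.

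The main obstacle is synthesising these per-square Chebyshev estimates into a single family of shifts $\{s_j\},\{t_\ell\}$ that serves \emph{all} squares sharing a given grid line at once while keeping the constant $C$ absolute. This is handled by a pigeonhole/union-bound argument on each grid line: for each square–side pair the Chebyshev bad set has relative measure controlled by a small prescribed constant, and the strips $S_{i,j}$ associated with distinct squares on the same line are pairwise disjoint, so the total $s$-integral $\sum_i\int_{\sigma_i^s}|Df|\,d\mathcal{H}^1$ is bounded by the full strip integral. Taking the intersection of the good sets over all squares sharing a grid line with the AC null-complement from the first step leaves a set of positive measure from which $s_j$ (resp.\ $t_\ell$) may be chosen, and one then verifies the quantitative bounds \eqref{1507} and \eqref{15071} with absolute constants.
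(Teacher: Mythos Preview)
Your approach has a genuine gap in the synthesis step. You propose to shift each entire grid line by a single parameter $s_j$ (resp.\ $t_\ell$) and then to intersect the Chebyshev ``good'' sets over all squares sharing that line. The trouble is that a single grid line borders an unbounded number of squares (the number grows with $K_k$), while each Chebyshev bad set has relative measure $\approx 1/C$. A union bound therefore forces $C$ to scale with the number of squares on the line, so the resulting constant in \eqref{1507}--\eqref{15071} is not absolute. Your remark that the strips $S_{i,j}$ are pairwise disjoint only controls the total $\sum_i\int_{\sigma_i^s}|Df|\,d\mathcal{H}^1$; it does not deliver the per-square estimate that the lemma requires.

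The paper circumvents this by perturbing \emph{vertices} rather than lines. Each vertex $V$ is moved along a short diagonal segment $S_V$, and the sides of $\Q_i^k$ are the segments $[\tilde V_1,\tilde V_2]$ joining perturbed neighbouring vertices. The Fubini/Chebyshev argument is run over $S_{V_1}\times S_{V_2}$ for each pair of neighbouring vertices, producing for each ordered pair a subset $S(V_1,V_2)\subset S_{V_1}$ of relative measure $\ge 1-1/\lambda$ on which the edge bound holds. The decisive point is that each vertex has at most four neighbours, so one intersects at most four such subsets; taking $\lambda>4$ leaves positive $\mathcal{H}^1$-measure from which $\tilde V$ can be chosen. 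This yields an absolute constant regardless of grid size. Note also that the resulting quadrilaterals are not axis-aligned rectangles; the diagonal direction of the perturbation is later exploited in the proof of Lemma~\ref{PWLF}~$v)$.
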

	\begin{proof}
		For each vertex $V= (v_1, v_2)$ of the grid, we let $S_V$ be the segment 
		$$
		S_V=\left\{ (x, y): x\in\left[v_1- \frac{r_k}{8}, v_1+ \frac{r_k}{8}\right], y-v_2= x- v_1 \right\}.
		$$
		For each set of two neighbouring vertices $V_1$ and $V_2$ of the grid (i.e., $V_1$ and $V_2$ are endpoints of the same side of a square $Q$ of the grid), the following estimates hold
		\begin{equation}\label{starnov}
		\int_{S_{V_1}\times S_{V_2}}  \left(\int_{[X_1, X_2]} |D_{\tau} f(t)|\dx\mathcal{H}^1(t)   \right)\dx(\mathcal{H}^1\times \mathcal{H}^1)(X_1, X_2)  \leq C r_k \int_{2Q} |Df|\dx\mathcal{L}^2,
		\end{equation}
		\begin{equation}\label{star2}
		\begin{split}
		\int_{S_{V_1}\times S_{V_2}}  \left(\int_{[X_1, X_2]} |D_{\tau} f(t)- Df(a)\tau|\dx\mathcal{H}^1(t)   \right)&\dx(\mathcal{H}^1\times \mathcal{H}^1)(X_1, X_2)  \\  \leq{} & C r_k\int_{2Q} |Df- Df(a)|\dx\mathcal{L}^2.
		\end{split}
		\end{equation}
		
		Above, for $X_i \in S_{V_i}$, $i=1, 2$, $[X_1, X_2]$ denotes the segment whose endpoints are $X_1$ and $X_2$. The integrals in left-hand side are meaningful, as a consequence of the well-known property of the Sobolev mapping $f$ of being a.c. on almost all lines. Moreover, inequalities $\eqref{starnov}$ and  $\eqref{star2}$ hold  because $\mathcal H^1(S_{V_1})\approx r_k$ and $\text{co}(S_{V_1}\cup S_{V_2})\subset 2 Q$. Furthermore, $Q$ is any square of the grid such that $V_1, V_2\in \partial Q$.
		
		For simplicity, we only estimate \eqref{1507}. It will be clear that we shall be able to guarantee also \eqref{15071} for the `good' quadrilaterals.
		
		Fix $\lambda >4$. By Chebyshev's inequality, from \eqref{starnov} we deduce that there exist a subset $S(V_1, V_2)$ of $S_{V_1}$ and  a subset $S(V_2, V_1)$ of $S_{V_2}$ such that 
		$$\mathcal H^1 \left(S(V_1, V_2)\right)\geq \left(1- \frac{1}{\lambda}\right) \mathcal H^1 (S_{V_1}),$$
		$$\mathcal H^1 \left(S(V_2, V_1)\right)\geq \left(1- \frac{1}{\lambda}\right) \mathcal H^1 (S_{V_2}) $$
		and for every $X_1\in S(V_1, V_2)$ and $X_2\in S(V_2, V_1)$ the following estimate holds
		$$
		\int_{[X_1, X_2]} |D_{\tau} f(t)|\dx\mathcal H^1(t)  \leq \frac{\lambda^2}{r_k} C\int_{2Q} |Df|\dx \mathcal{L}^2.
		$$
		Now, let $V$ be a vertex of the grid. There are (at most) four vertices $V_1, V_2, V_3, V_4$ of the grid which are neighbouring vertices of  $V$ and therefore, by the above construction we find four subsets $S(V, V_i)$, $i=1,2,3,4$ of $S_V$ such that
		$$
		\mathcal S:= \bigcap_{i=1}^4 S(V, V_i)
		$$
		has positive $\mathcal H^1$-measure, since $\lambda >4$ and thus it is not empty. We replace $V$ with a $\tilde V\in \mathcal S$. To conclude, each square $Q= \text{co}\left\{V_1,V_2, V_3,V_4\right\}$ 
		of the grid will be replaced by the quadrilateral $\Q= \text{co}\left\{\tilde{V}_1,\tilde{V}_2, \tilde{V}_3, \tilde{V}_4\right\}$.
		
	\end{proof}

	For the following lemma recall that we assume \eqref{JPAE}.
	\begin{lemma}\label{PWLF}
		Let $\Omega \subset \er^2$ be a finitely connected bounded domain and let $f \in W^{1}X(\Omega, \er^{2})$ be a homeomorphism. Let $k\in \en$ and $\epsilon_k, \delta_k >0$ be given numbers. Let $\Omega_k$ be the set chosen in Lemma~\ref{Segregation} containing squares of type $Q_i^k = Q(a_i, 2^{-m_k})$ determined by the given numbers $\epsilon_k$,  $\delta_k$. Let $\Q_i^k$ be the quadrilaterals in $\Omega_k \setminus \Omega_{k-1}$ derived from $Q_i^k$ by Lemma~\ref{GridLock}. Then there exists a piece-wise linear injective function $h$ defined on $\Gamma_k = \bigcup_i \partial\Q_i^k$ such that:
		\begin{enumerate}
			\item[$i)$] it holds that
			$$
			\|f-h\|_{L^{\infty}(\Gamma_k)}  <4\epsilon_k,
			$$
			\item[$ii)$] at each vertex $x$ of each $\Q_i^k$ it holds that $h(x) = f(x)$,
			\item[$iii)$] for any $S$ a side of a $\Q_i^k$ and for $\H^1$ a.e. $x \in S$ we have
			\begin{equation}\label{SpiderMan}
			|D_{\tau}h(x)| \leq \oint_S |D_{\tau}f|\dx \H^1,
			\end{equation}
			\item[$iv)$] for any $i \notin B_k$ it holds that
			$$
			\oint_{\partial \Q_i^k} |Df(a_i)\tau - D_{\tau}h| \dx\H^1 \leq C\,\epsilon_k.\ 
			$$
			\item[$v)$] for any $i \notin B_k$ with $\sqrt{\epsilon_k}<|Df(a_i)|<\tfrac{1}{\sqrt{\epsilon_k}}$ and $J_f(a_i)>4\sqrt{\epsilon_k}$ it holds that $h$ is linear on each $S$ side of $\Q_i^k$.
		\end{enumerate}
	\end{lemma}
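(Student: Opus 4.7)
The plan is to define $h$ piecewise linearly on each side $S$ of $\Gamma_k$, with $h$ matching $f$ at every vertex, by a rule that depends on whether at least one of the two quadrilaterals adjacent to $S$ has index $i\notin B_k$. Write $V_0,V_1$ for the endpoints of $S$. If at least one of the adjacent indices satisfies $i\notin B_k$, I take $h|_S$ to be the affine interpolation from $f(V_0)$ to $f(V_1)$. If both adjacent indices lie in $B_k$, I use instead an $N$-piece piecewise-linear approximation by arc-length--equal subdivision: choose $y_0=V_0,y_1,\dots,y_N=V_1$ on $S$ so that $\H^1(f([y_j,y_{j+1}]))=L_S|S|/N$ for each $j$, where $L_S=\oint_S|D_\tau f|\,d\H^1$, and let $h$ be the piecewise linear map with nodes equally spaced on $S$ taking the value $f(y_j)$ at the $j$-th node.

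Property (ii) is immediate, and (v) holds because if $i$ satisfies the non-degeneracy hypotheses then $i\notin B_k$, so every side of $\Q_i^k$ receives the linear rule. For (iii), in the linear case $|D_\tau h|=|\oint_S D_\tau f\,d\H^1|\le\oint_S|D_\tau f|\,d\H^1=L_S$ by Jensen, while in the subdivision case each linear sub-piece has slope of magnitude $|f(y_{j+1})-f(y_j)|/(|S|/N)\le\H^1(f([y_j,y_{j+1}]))/(|S|/N)=L_S$. For (i), since $h(S)$ lies in the convex hull of image points on $f(S)$ and $\diam f(S)\le\diam f(2Q_i^k)\le\epsilon_k$ by Lemma~\ref{Segregation}(vii), we obtain $\|h-f\|_{L^\infty(S)}\le\epsilon_k<4\epsilon_k$. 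For (iv), any $i\notin B_k$ has every side of $\Q_i^k$ linear; on each such side
\[
|D_\tau h-Df(a_i)\tau|\,=\,\Big|\oint_S (D_\tau f-Df(a_i)\tau)\,d\H^1\Big|\,\le\,\oint_S|D_\tau f-Df(a_i)\tau|\,d\H^1,
\]
so integrating over the four sides of $\partial\Q_i^k$ and invoking estimate~\eqref{15071} from Lemma~\ref{GridLock} yields $\oint_{\partial\Q_i^k}|D_\tau h-Df(a_i)\tau|\,d\H^1\le C\epsilon_k$.

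The main obstacle is injectivity of $h$ on $\Gamma_k$; compatibility at shared vertices is automatic from (ii), and on linear sides $h|_S$ is trivially injective. On a subdivision side, choosing $N$ so large that the polyline follows the Jordan arc $f(S)$ in arbitrarily small Hausdorff distance, combined if necessary with tiny transverse perturbations of the interior nodes analogous to the injectification step of Theorem~\ref{extension}, guarantees injectivity within the side. The subtler point is self-intersection of the full $h(\partial\Q_i^k)$. When $i\notin B_k$ satisfies the hypotheses of (v), $h(\partial\Q_i^k)$ is a perturbation (of size $\le\epsilon_k\,2^{-m_k-1}$ from Lemma~\ref{Segregation}(vi)) of the non-degenerate parallelogram $f(a_i)+Df(a_i)(\Q_i^k-a_i)$, whose smallest singular value is $J_f(a_i)/|Df(a_i)|>4\epsilon_k$ by the hypotheses of (v); the perturbation is then bounded by a small fraction of the parallelogram's smallest dimension and the resulting quadrilateral is a simple polygon. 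For $i\notin B_k$ with degenerate $Df(a_i)$ or for $i\in B_k$, any remaining crossings between non-adjacent sides of $\Q_i^k$ are removed by small transverse perturbations of the interior of the offending edges; the slack in (iii) from Jensen's strict inequality (valid whenever $D_\tau f$ is not a.e.\ constant along the side) absorbs such a perturbation without disturbing (iii) or (iv).
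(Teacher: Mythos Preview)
Your construction and the verification of (i)--(iv) are essentially sound, and your argument for (v) in the non-degenerate case (via the parallelogram perturbation bound) matches the paper's. The serious gap is injectivity on the \emph{degenerate} good quadrilaterals, i.e.\ those $\Q_i^k$ with $i\notin B_k$ but $J_f(a_i)=0$ or $Df(a_i)=0$.

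On such a $\Q_i^k$ your rule forces every side to be the single chord $[f(V_0),f(V_1)]$. The only control available is $\|f(x)-f(a_i)-Df(a_i)(x-a_i)\|_{L^\infty(2Q_i^k)}<\epsilon_k2^{-m_k-1}$ from Lemma~\ref{Segregation}(vi). When $Df(a_i)=0$ this says merely that all four vertex images $f(V_j)$ lie in a ball of radius $\epsilon_k2^{-m_k-1}$ around $f(a_i)$; it says nothing about their cyclic order, and the chord quadrilateral can perfectly well be a bowtie. (When $\operatorname{rank}Df(a_i)=1$ the four images are $\epsilon_k2^{-m_k-1}$-close to a segment and the same phenomenon occurs.) Your ``Jensen slack'' repair does not save this: the quantity $L_S-|f(V_1)-f(V_0)|/|S|$ is not bounded below, while the extra length needed to route a polyline around a crossing is governed by the uncontrolled geometry of the four vertex images. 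Worse, any such perturbation on a side $S$ shared with a neighbouring quadrilateral $\Q_j^k$ that \emph{does} satisfy the hypotheses of (v) would destroy linearity of $h$ on $S$ and hence (v) for $\Q_j^k$.

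The paper avoids exactly this by using the chord \emph{only} on sides of quadrilaterals satisfying the non-degeneracy hypotheses of (v), and on every other side taking a fine $n$-point interpolation of $f$ (evenly spaced nodes $x_1,\dots,x_n$ in the domain with $\tilde f_n(x_j)=f(x_j)$, then constant-speed reparametrisation of the image polyline). For $n$ large, $\tilde f_n\to f$ uniformly on $\Gamma_k$, so injectivity follows from the injectivity of the homeomorphism $f$ (details deferred to \cite{HP,C}). On the non-degenerate (v)-sides the chord is within $\epsilon_k2^{-m_k}$ of the affine map $f(a_i)+Df(a_i)(\cdot-a_i)$ and hence of $f$ itself, and the parallelogram argument you reproduce handles those. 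The fix for your approach is therefore to narrow the linear rule to exactly the (v)-hypothesis sides and use fine interpolation everywhere else; your (iv) argument then needs the extra averaging step the paper carries out, replacing $D_\tau h$ on each sub-segment $L$ by $\oint_L D_\tau f$ before invoking \eqref{15071}.
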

	\begin{proof}
		The first step is to define a piece-wise linear approximation of $f$, (call it $\tilde{f}_n$) on each side $S$ of every $\Q_i^k$. We call $x_1,\dots, x_n$ $n$ evenly spaced points along $S$ with $x_1$ and $x_n$ being the two endpoints of $S$. Then we put $\tilde{f}_n(x_j) = f(x_j)$ and $\tilde{f}_n$ is linear on each segment between $x_j$ and $x_{j+1}$. Of course $\tilde{f}_n$ converges uniformly to $f$ on $S$ and for $n$ large enough is injective. The argument is rather simple and a detailed version can be found in \cite{HP}, \cite{C} or an alternative approach can be found in \cite{DPP}. We define $h$ so that $h(S) = \tilde{f}(S)$ for each $S$, but $h$ parametrizes its image from $S$ at constant speed.
		
		Point $i)$ holds because of Lemma~\ref{Segregation} point $vii)$ and because the oscillation of $h$ on any $\partial \Q_i^k$ is bounded by the oscillation of $f$ on the $\partial \Q_i^k$ which is bounded by the oscillation of $f$ on the $2Q_i^k$.
		
		Point $ii)$ is obvious. Point $iii)$ holds because the piece-wise linear curve is not longer than the original curve i.e.
		$$
		\oint_S|D_{\tau}h| \dx\H^1 \leq \oint_S |D_{\tau}f| \dx\H^1,
		$$
		and because the derivative has constant size, i.e.
		$$
		|D_{\tau}h(x)| = \oint_S |D_{\tau}h| \dx\H^1
		$$
		almost everywhere.
		
		Let $x_{m}, x_{m+1}$ be a pair of adjacent points on $S$ such that the derivative of $h$ is constant on the segment $L = [x_mx_{m+1}]$ and $h(x_m) = f(x_m)$ and $h(x_{m+1}) = f(x_{m+1})$. Then we can calculate for all $t\in L$ that 
		$$
		D_{\tau}h(t) = \oint_LD_{\tau} f\dx\H^1.
		$$
		When we denote $L(t)$ as the segment $L\subset \partial\Q_i^k$ described above that contains the point $t\in\partial\Q_i^k$ we can easily calculate
		$$
		\begin{aligned}
		\oint_{\partial\Q_i^k}\big|\oint_{L(t)} D_{\tau}f\dx \H^1 - Df(a_i)\tau \big|\dx \H^{1}(t) &\leq \oint_{\partial\Q_i^k}\big|\oint_{L(t)} D_{\tau}f - Df(a_i)\tau \dx\H^1\big|\dx \H^{1}(t) \\
		&\leq \oint_{\partial\Q_i^k}\oint_{L(t)} |D_{\tau}f - Df(a_i)\tau| \dx \H^1\dx \H^{1}(t) \\
		&\leq  \tfrac{1}{\H^1(\partial\Q_i^k)} \sum_{L\subset\partial\Q_i^k} \H^1(L)\oint_{L} |D_{\tau}f - Df(a_i)\tau| \dx \H^1 \\
		&\leq  \oint_{\partial\Q_i^k}|D_{\tau}f - Df(a_i)\tau| \dx \H^1.
		\end{aligned}
		$$
		Now for any $i\notin B_k$ we can see by \eqref{15071} of Lemma~\ref{GridLock} that the above can further be estimated by $C\,\epsilon_k$, which is exactly point $iv)$.
		
		We now prove point $v)$. At any point $x$ where $J_f(x) > 0$ it holds that $J_f(x) = \lambda_1\lambda_2$, where $\lambda_1 = \max\{|Df(x)v|; |v|=1\}$ and $\lambda_2 = \min\{|Df(x)v|; |v|=1\}$. Given that $\lambda_1 < \tfrac{1}{\sqrt{\epsilon_k}}$ and $J_f(x) > 4 \sqrt{\epsilon_k}$ it follows that $\lambda_2 > 4\epsilon_k$ (in the following we will use this fact repeatedly to estimate distance between images). In Lemma~\ref{GridLock} we made $\Q_i^k$ from $Q_i^k$ by moving its vertexes along a southwest-northeast diagonal (we will call it now the SW-NE diagonal). Therefore the distance of the northwest (we will call it now the NW) and southeast (SE) vertexes to the SW-NE diagonal is not changed in the process. This distance is $\sqrt{2}\cdot 2^{-m_k}$. Therefore the distance of the images in $Df(a_i)$ of the NW and SE vertexes of $\Q_i^k$ to the image of the SW-NE diagonal is at least $4\sqrt{2}\cdot 2^{-m_k}\epsilon_k$. This means that the balls of radius $2^{-m_k}\epsilon_k$ centered at the image of the NW and SE vertexes of $\Q_i^k$ both lie entirely on different sides of the line containing the image of the SW-NE diagonal. In fact they have
		\begin{equation}\label{Compass}
		\text{ distance from the image of the diagonal of more than $2^{-m_k}\epsilon_k$.}
		\end{equation}
		The SW and NE vertexes of $\Q_i^k$ lie on a SW-NE line through $a_i$ and so the images of $a_i$ and the SW and NE vertices lie on the line containing the image of the SW-NE diagonal. It holds that $\Q_i^k \supset \tfrac{3}{4}Q_i^k$ and so the distance of the SW (NE) vertex from $a_i$ is at least $\tfrac{3\sqrt{2}}{4}2^{-m_k}$ (and the images of the vertexes lie on opposite direction from sides of the image of $a_i$). Thus the distance of the image of the SW (NE) vertex in $Df(a_i)$ to the image of $a_i$ in $Df(a_i)$ is at least $3\sqrt{2}\cdot2^{-m_k}\epsilon_k$. This means that the ball of radius $2^{-m_k}\epsilon_k$ centered at the image of the SW (or NE) vertex lies entirely on different sides of the line containing the point $Df(a_i)a_i$ and perpendicular to the image of the SW-NE diagonal. Therefore, and thanks to \eqref{Compass}, any pair of balls of radius $2^{-m_k}\epsilon_k$ centered at the image of a pair of vertexes do not intersect. Now any triangle having a vertex in each of the 3 balls of radius $2^{-m_k}\epsilon_k$ centered at the image in $Df(a_i)$ of a the SW, NW and NE vertex (or similarly for SW, NE, SW vertex) of $\Q_i^k$ is a positively oriented Jordan curve (there is a simple homotopy with $Df(a_i)$ on SW NW NE). Because the image of the vertexes of $\Q_i^k$ in $f$ each lie in a ball of radius $2^{-m_k}\epsilon_k$ around the image in $Df(a_i)$ of the same vertex the 2-piece-wise affine map (divide $\Q_i^k$ by the SW-NE diagonal) coinciding with $f$ on the vertices of $\Q_i^k$ has positive orientation and is injective on $\Q_i^k$.
	\end{proof}
	
	\begin{lemma}\label{Click}
		Let $\Omega \subset \er^2$ be a finitely connected bounded domain and let $f \in W^{1}X(\Omega, \er^{2})$ be a homeomorphism. Let $k\in \en_0$ and $\epsilon_k, \delta_k >0$
		. Let $\Omega_k \subset \Omega_{k+1}$ be a pair of sets chosen in Lemma~\ref{Segregation} which we apply with the given $\epsilon_k$, $\epsilon_{k+1}$ and $\delta_k$, $\delta_{k+1}$ giving squares $\{Q_i^k\}_i$, resp $\{Q_i^{k+1}\}_{i}$. Let $\Q_i^k$ be the quadrilaterals from Lemma~\ref{GridLock} in $\Omega_k \setminus \Omega_{k-1}$ and $\Q_i^{k+1}$ be the quadrilaterals from Lemma~\ref{GridLock} in $\Omega_{k+1}\setminus \Omega_{k}$. Call $S_k$ the space between $\Q_i^{k}$ and $\Q_i^{k+1}$, i.e. the union of all components of $\Omega \setminus [\bigcup_i \Q_i^k \cup \bigcup_i \Q_i^{k+1}]$ which intersects $\partial \Omega_k$. Finally let $h$ be the piece-wise linear function determined by Lemma~\ref{PWLF}. Then there exists a finite collection of quadrilaterals $\{{\Q}_i^{k}\}_{i=K_k+1}^{N_k}$ such that $\bigcup_{i=K_k+1}^{N_k} {\Q}_i^{k} = S_k$ and a finitely piece-wise affine homeomorphism $g$ defined on $S_k$ such that $g = h$ on $\partial S_k$ and
		\begin{equation}\label{Halelujah}
		\|Dg\chi_{S_k}\|_{X(\Omega)} \le C\|Df\chi_{S_k}\|_{X(\Omega)}.
		\end{equation}
	\end{lemma}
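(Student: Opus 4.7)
The plan is to partition $S_k$ into a finite collection of quadrilaterals of scale comparable to the coarser grid $2^{-m_k}$, suitably extend $h$ to the new interior segments of this partition, and then fill in each quadrilateral using Corollary~\ref{RePara}. The desired norm inequality \eqref{Halelujah} will follow by combining pointwise Lipschitz bounds on the resulting $g$ with the norm comparison \eqref{norm-comparison}.

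For the decomposition: each connected component of $S_k$ is essentially a thin strip around a component of $\partial\Omega_k$, bounded on one side by segments of $\partial\bigcup_i\Q_i^k$ and on the other by segments of $\partial\bigcup_i\Q_i^{k+1}$. Because $m_{k+1}\gg m_k$, the outer boundary is much finer than the inner one. I would emanate ``radial'' segments from each vertex of the inner boundary transversally across $S_k$, anchoring each at a suitably chosen nearby vertex of the outer boundary, so that the resulting pieces are uniformly bi-Lipschitz to squares of side roughly $2^{-m_k}$. The precise endpoint on the outer boundary is chosen by an averaging argument in the spirit of Lemma~\ref{GridLock}, so that $\int|D_\tau f|\,d\H^1$ along each radial is controlled by an area mean of $|Df|$ on a thin rectangle inside $S_k$.

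On each radial segment I would define $h$ as a constant-speed parametrization of the straight target segment between the two already-prescribed vertex values $f(\cdot)$, with a small geodesic perturbation (as in Theorem~\ref{extension}) to keep the extended $h$ globally injective on $\partial S_k$ together with all radials. On each resulting quadrilateral $\Q_i^k$ ($K_k<i\le N_k$) I would invoke Corollary~\ref{RePara}, after reparametrizing $h$ at constant speed on each side (which does not alter its image), to obtain a finitely piecewise affine homeomorphism $g$ matching $h$ on the boundary and satisfying
$$
\|Dg\|_{L^\infty(\Q_i^k)} \,\le\, \frac{C}{2^{-m_k}}\int_{\partial\Q_i^k}|D_\tau h|\,d\H^1.
$$
Using Lemma~\ref{PWLF}$(iii)$ on those portions of $\partial\Q_i^k$ that lie in the original grids, and the averaging choice on the radials, the right-hand side is dominated, up to an absolute constant, by $M(|Df|\chi_{S_k^\ast})(x)$ for every $x\in\Q_i^k$, where $S_k^\ast$ is a slight enlargement of $S_k$.

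Combining this pointwise estimate over the pieces yields $M(|Dg|\chi_{S_k})\lesssim M(|Df|\chi_{S_k^\ast})$ on $\Omega$, and the norm comparison \eqref{norm-comparison} delivers \eqref{Halelujah} (the enlargement $S_k^\ast$ is absorbed at negligible cost since $S_k$ is itself a union of thin tiles whose dilates are controlled by the maximal operator). The principal obstacle is the decomposition itself: since the two grids live at genuinely different scales, producing a subdivision whose pieces are simultaneously uniformly bi-Lipschitz to squares, admit an injective piecewise linear extension of $h$, and furnish radial contributions to $\int_{\partial\Q_i^k}|D_\tau h|\,d\H^1$ absorbable into area integrals of $|Df|$ over $S_k$, requires a delicate geometric layout matched to the averaging choices.
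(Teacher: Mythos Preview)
Your overall strategy---decompose $S_k$, extend $h$ to the new segments, apply Corollary~\ref{RePara}, and close with the maximal function comparison \eqref{norm-comparison}---is the right one, and it matches the paper's architecture. But there is a genuine gap in the decomposition step, and it is precisely the point you flag as ``the principal obstacle''.

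You propose pieces at the \emph{coarse} scale $2^{-m_k}$. Each such piece then has one side lying on $\partial\bigcup_j\Q_j^{k+1}$, and that side is a union of many (roughly $2^{m_{k+1}-m_k}$) segments on which $h$, coming from Lemma~\ref{PWLF} applied to the $\Q_j^{k+1}$, has constant speed only individually, not globally. Your remedy is to ``reparametrize $h$ at constant speed on each side (which does not alter its image)''. That is true of the image, but it \emph{does} alter $h$ as a function on $\partial S_k$; the resulting $g$ then fails the required boundary condition $g=h$, and the global map assembled from $g$ on $S_k$ and $\hat f$ on the $\Q_j^{k+1}$ is no longer continuous (let alone a homeomorphism). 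If instead you keep $h$ unchanged and use Theorem~\ref{extension} directly, the bound becomes
\[
\|Dg\|_{L^\infty(\Q_i^k)}\le C\,\|D_\tau h\|_{L^\infty(\partial\Q_i^k)}\le \frac{C}{2^{-m_{k+1}}}\int_{\partial\Q_i^k}|D_\tau h|\,d\H^1,
\]
with the wrong scale $2^{-m_{k+1}}$ in the denominator rather than $2^{-m_k}$; equivalently, $\|D_\tau h\|_{L^\infty}$ on the fine side is governed by $\max_j\oint_{2Q_j^{k+1}}|Df|$, and this is \emph{not} dominated by $M(|Df|)(x)$ for $x$ sitting in your coarse quadrilateral, since $x$ need not lie anywhere near the offending small cube $2Q_j^{k+1}$.

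The paper resolves this by a graduated dyadic refinement: starting from squares of side $2^{1-m_k}$ next to $\partial\bigcup_i\Q_i^k$, one adds successive layers, halving the side length at each step, until reaching side $2^{1-m_{k+1}}$ near $\partial\Omega_k$; a final thin tube is then filled by quadrilaterals uniformly bi-Lipschitz to $Q(0,2^{-m_{k+1}})$. In this layout, adjacent pieces differ in size by at most a factor~$2$, so the constant-speed pieces of $h$ on any side have length at least a fixed fraction of the perimeter, Corollary~\ref{RePara} applies without any reparametrization on $\partial S_k$, and each $\|Dg\|_{L^\infty(\Q_l^k)}$ is controlled by $\oint_{2Q_l^k}|Df|$ at the \emph{matching} scale, which feeds into the maximal function bound \eqref{Part1}--\eqref{Part2}. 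This transition from one scale to the other is the missing ingredient in your plan.
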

	\begin{proof}
		
		We start by assuming that $\Omega$ is simply connected and $\Omega_{k+1}\setminus \Omega_k$ has exactly one component. If this were not so, then we could deal with each component seperately in the same way.

		\begin{figure}
			\centering
			
			\includegraphics[width=11cm]{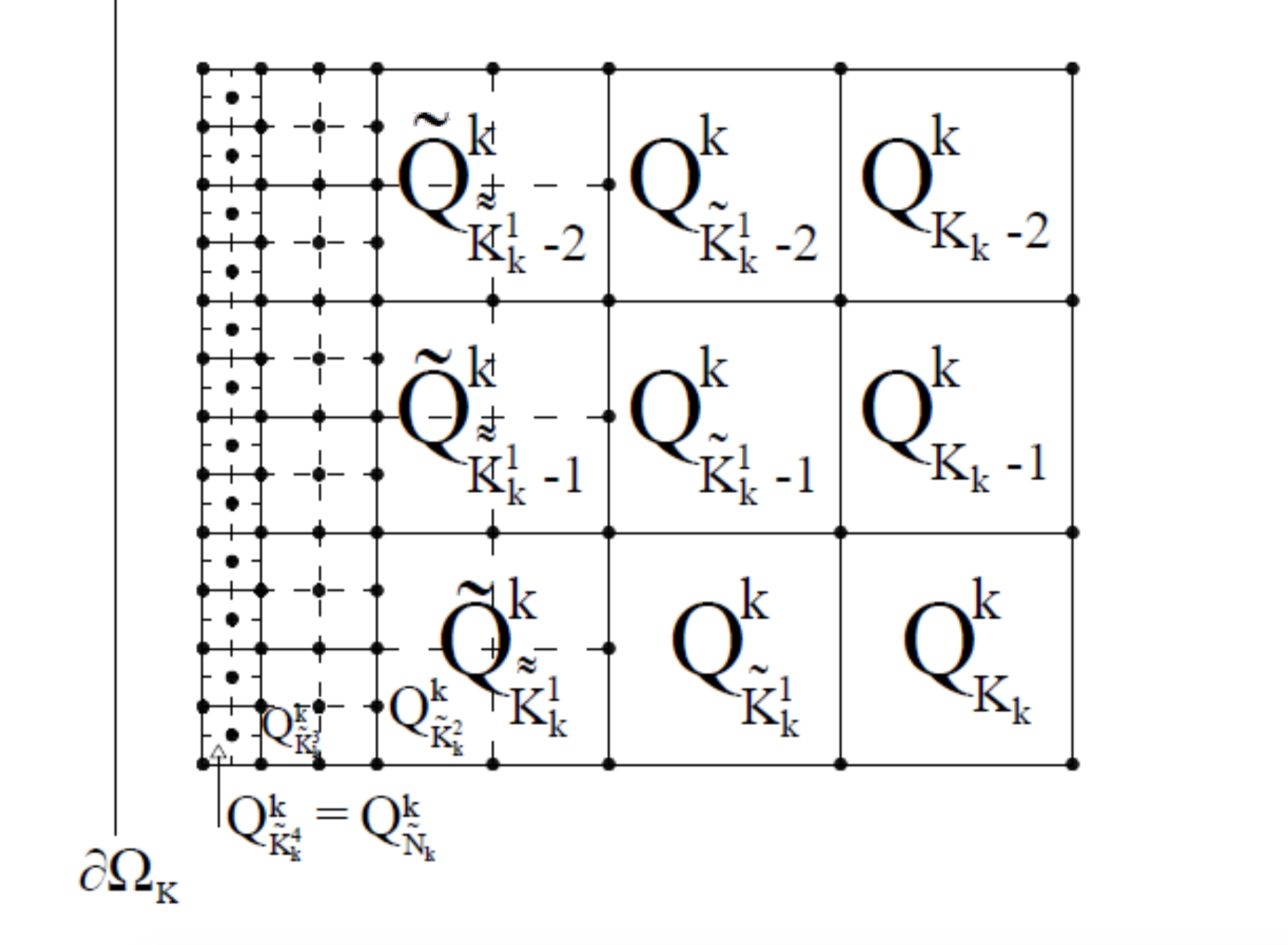}
			\caption{The gradual halving of squares untill we have squares with the same sidelength as in the neighbouring set}\label{fig:Fill1}
		\end{figure}

		\step{1}{Go from squares of size $2^{-m_k}$ to squares of size $2^{-m_{k+1}}$}{A}
		Call $K_k^1 = K_k$. We have the quadrilaterals $\Q_{i}^k$, $1\le i\le K_k^1$, from Lemma~\ref{GridLock} which were constructed from the squares $Q_i^k\in \mathcal{D}_{-m_k}^{v_k}$ from Lemma~\ref{Segregation}. Now we name the neighbours of $Q_i^k$. We index the finite set of neighbouring squares
		$$
		\Big\{Q \in \mathcal{D}_{-m_k}^{v_k}; Q^{\circ}\cap \Big(\bigcup_{i=1}^{{K}_k^1}Q_i^k\Big) = \emptyset, \overline{Q} \cap\overline{\Big(\bigcup_{i=1}^{{K}_k^1}Q_i^k\Big)} \neq \emptyset\Big\} =\{{Q}_i^k\}_{ i =K_k+1}^{\tilde{K}_k^1}.
		$$
		That is to say the squares ${Q}_i^k$, $K_k^1+1\leq i \leq \tilde{K}_k^1$ are those shifted dyadic squares that are not contained in the set $\bigcup_{i=1}^{K_k^1}Q_i^k$ but share at least one common vertex with them (see Figure~\ref{fig:Fill1}). By $v)$ of Lemma~\ref{Segregation} and by $\diam_{\infty} Q(x,2^{-m_k}) = 2^{1-m_k}$ we have that
		$$
		4\cdot 2^{-m_k} \leq \dist_{\infty}\bigg(x, \partial \Big[\bigcup_{i=1}^{\tilde{K}_k^1}{Q}_i^k\Big]\bigg)
		$$
		for all $x\in \partial \Omega_k$. Similarly call the neighbours of the previously added squares 
		$$
		\Big\{Q \in \mathcal{D}_{-m_k}^{v_k}; Q^{\circ}\cap \Big(\bigcup_{i=1}^{\tilde{K}_k^1}Q_i^k\Big) = \emptyset, \overline{Q}\cap\overline{\Big(\bigcup_{i=1}^{\tilde{K}_k^1}Q_i^k\Big) } \neq \emptyset\Big\} =\{\tilde{Q}_i^k\}_{ i =\tilde{K}_k^1+1}^{\tilde{\tilde{K}}_k^1}.
		$$
		Then
		$$
		2\cdot 2^{-m_k} \leq \dist_{\infty}\bigg(x, \partial\Big[ \bigcup_{i=1}^{{\tilde{K}}_k^1}{Q}_i^k\cup \bigcup_{i={\tilde{K}}_k^1+1}^{\tilde{\tilde{K}}_k^1}\tilde{Q}_i^k\Big]\bigg).
		$$
		for all $x\in \partial \Omega_k$.
		
		We divide each square $\tilde{Q}_i^k = Q(\tilde{a}_i, 2^{-m_k})$ for $\tilde{K}_k^1+1 \leq i \leq \tilde{\tilde{K}}_k^1$ into its four quarters $Q(\tilde{a}_i + 2^{-m_k-1}z, 2^{-m_k-1})$, for each $z$ a corner of $Q(0,1)$. We number the squares we get by dividing $\tilde{Q}_i^k$, $\tilde{K}_k^1+1 \leq i \leq \tilde{\tilde{K}}_k^1$ into quarters as $Q_i^k = Q(a_i, 2^{-m_k-1})$ for $\tilde{K}_k^1+1 \leq i \leq {\tilde{K}}_k^2$. Then we have
		$$
		4\cdot 2^{-m_k-1} \leq \dist_{\infty}\bigg(x, \partial \Big[\bigcup_{i=1}^{\tilde{K}_k^2}{Q}_i^k\Big]\bigg)
		$$
		for all $x\in \partial \Omega_k$.
		
		We now repeat this last operation. We call the neighbours of the previously added squares
		$$
		\Big\{Q \in \mathcal{D}_{-m_k-1}^{v_k}; Q^{\circ}\cap  \Big( \bigcup_{i=1}^{\tilde{K}_k^2}Q_i^k\Big) = \emptyset, \overline{Q}\cap\overline{\Big(\bigcup_{i=1}^{\tilde{K}_k^2}Q_i^k\Big)} \neq \emptyset\Big\} =\{\tilde{Q}_i^k\}_{ i =\tilde{K}_k^2+1}^{\tilde{\tilde{K}}_k^2}.
		$$
		We divide the squares $\{\tilde{Q}_i^k = Q(\tilde{a}_i, 2^{-m_k-1})\}_{i = \tilde{K}_k^2+1}^{\tilde{\tilde{K}}_k^2}$ into its four quarters $Q(\tilde{a}_i + 2^{-m_k-2}z, 2^{-m_k-2}) \in \mathcal{D}_{-m_k-2}^{v_k}$, for each $z$ a corner of $Q(0,1)$ and call these squares $Q_i^k = Q(a_i, 2^{-m_k-2})$, $\tilde{K}_k^2+1 \leq i \leq \tilde{K}_k^3$. As before we have that
		$$
		4\cdot 2^{-m_k-2} \leq \dist_{\infty}\Big(x, \partial \bigcup_{i=1}^{\tilde{K}_k^3}{Q}_i^k\Big)
		$$
		for all $x\in \partial \Omega_k$.
		
		After $m_{k+1} - m_k+1$ steps we start adding squares of type $Q(a_i, 2^{-m_{k+1}})$. We call those $Q_i^k$, $\tilde{K}_k^{m_{k+1} - m_k+1} \leq i \leq \tilde{N}_k$ the dyadic squares of type $Q(a_i, 2^{-m_{k+1}}) \subset \Omega_k$ which are not in $\bigcup_{i=1}^{\tilde{K}_k^{m_{k+1} - m_k+1}}Q_i^k$. Therefore
		\begin{equation}\label{Sandwich}
		\text{the distance from $\partial \bigcup_{i=1}^{\tilde{N}_k}Q_i^k$ to $\partial \Omega_k$ is between 2 and 4 times $2^{-m_{k+1}}$.}
		\end{equation}
		By adding shifted dyadic squares of type $Q\in \mathcal{D}_{-m_{k+1}}^{v_{k+1}}$ to the grid in $\Omega_{k+1} \setminus \Omega_k$ we can guarantee that the distance from $\partial \bigcup_i Q_i^{k+1}$ to $\partial \Omega_k$ is also between $2$ and $4$ times $2^{-m_{k+1}}$. Thus the distance between $\partial \bigcup_{i=1}^{\tilde{N}_{k}} Q_i^k$ and $\partial \bigcup_{i=1}^{\tilde{N}_{k+1}} Q_i^{k+1}$ is between $4$ and $8$ times $2^{-m_{k+1}}$. Call $\tilde{G}_k$ the component of $\er^2 \setminus (\bigcup_{i=1}^{\tilde{N}_{k}} Q_i^k \cup \bigcup_{i=1}^{\tilde{N}_{k+1}} Q_i^{k+1})$ containing $\partial \Omega_k$.
		
		\begin{figure}
			\centering
			\includegraphics[width=11cm]{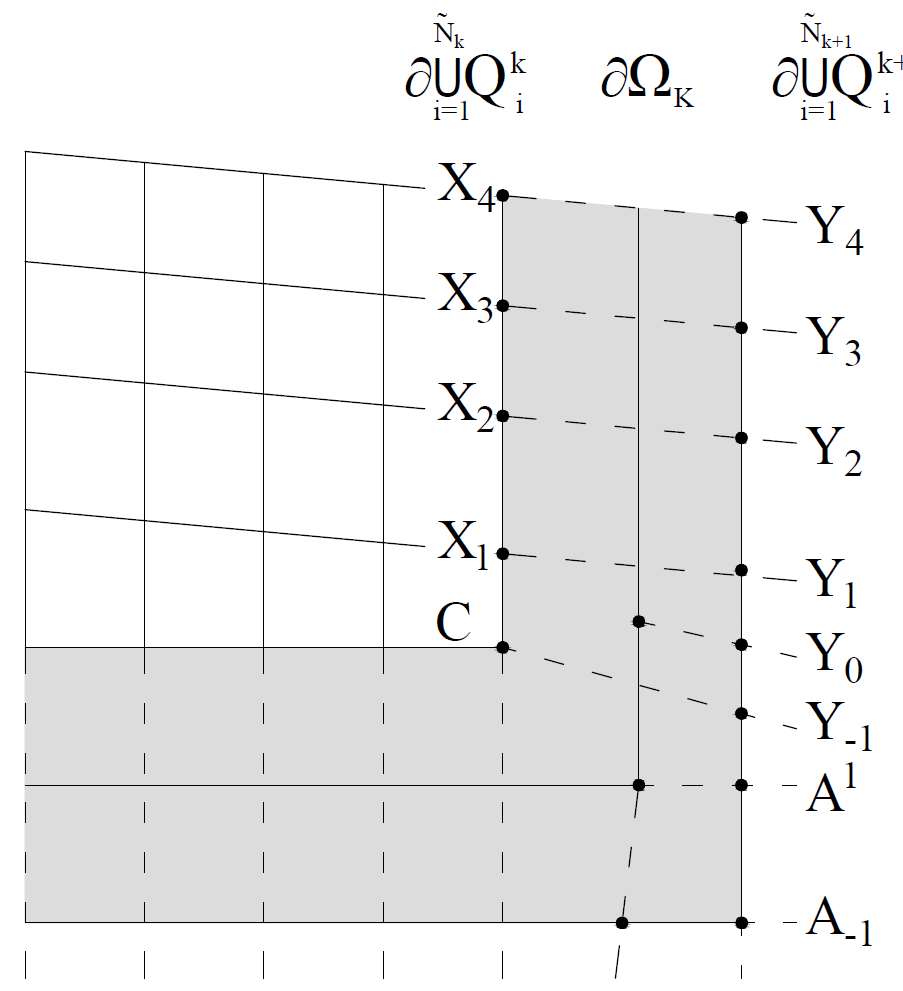}
	
			\caption{Filling in the left over space (in the picture shaded) between two neighbouring grids with quadrilaterals uniformly piece-wise affine bi-Lipschitz equivalent with $Q(0,2^{-m_{k+1}})$. The line vertices $X_i$ are partnered with the line vertices $Y_i$. The vertex $A_{-1}$ is a reflex vertex and $C$ is an acute vertex. We the two vertices adjacent to $A_{-1}$ (one of them is denoted as $A^1$ the other is not denoted) to the near vertex of $\partial \Omega_k$ with a segment. The vertex $C$ is connected with the next available neighbour of $A$ (in this case $Y_{-1}$). The vertex $Y_0$ is connected with $\partial \Omega_k$.}\label{fig:Fill2}
		\end{figure}
		
		In our construction we always have two layers of squares of side length $2^{1-m_k-j}$ before we start adding squares of side length $2^{1-m_k-(j+1)}$. Therefore
		\begin{equation}\label{BoundMe}
			\begin{aligned}
				& 2Q_i^k \text{ intersects at most its neighbours and its neighbours' neighbours but no} \\
				&\text{other squares further away.} 
			\end{aligned}
		\end{equation}
		Further the square $2Q_i^k$ is contained in the neighbours of $Q_i^k$. Therefore any square $Q_{i'}^k$ such that $2Q_{i'}^k$ intersects $2Q_i^k$ then it is at most a third neighbour (i.e. a neighbour of $Q_i^k$ a neighbour's neighbour or a neighbour of a neighbour of a neighbour of $Q_i^k$). But the number of third neighbours is bounded. Therefore there exists an $L$ such that
		\begin{equation}\label{AlmostFiniteOverlaps1}
		\sum_{i=1}^{\tilde{N}_k}\chi_{2Q_i^k}(x)\leq L
		\end{equation}
		for almost every $x \in S_k$.

		\step{2}{Fill the rest of $S_k$ with quadrilaterals}{B}
		What is left over is a `tube' around $\partial\Omega_k$ approximately $2^{-m_{k+1}}$ wide, which can be divided into quadrilaterals all uniformly bi-Lipschitz equivalent with a square of side length $2^{1-m_{k+1}}$. Although this fact is obvious we describe one way how to do this in detail here.
		
		We call those vertices $X$ of squares $Q_i^k$ such that $X \in \partial \bigcup_{i=1}^{\tilde{N}_k} Q_i^k$ outer vertices. Similarly we also call those vertices $X$ of squares $Q_i^{k+1}$ such that $X \in \partial \bigcup_{i=1}^{\tilde{N}_{k+1}} Q_i^{k+1}$ outer vertices. From $iv)$ of Lemma~\ref{Segregation} we have that $\partial \bigcup_{i=1}^{\tilde{N}_k} Q_i^{k}$ and $\partial \bigcup_{i=1}^{\tilde{N}_{k+1}} Q_i^{k+1}$ are piece-wise linear parallel to coordinate axes and each of their sides corresponds to a side of $\partial \Omega_k$. We call a vertex of $\partial \bigcup_{i=1}^{\tilde{N}_{k}} Q_i^{k}$ (or $\partial \bigcup_{i=1}^{\tilde{N}_{k+1}} Q_i^{k+1}$) a reflex corner of $\partial \bigcup_{i=1}^{\tilde{N}_{k}} Q_i^{k}$ (or $\partial \bigcup_{i=1}^{\tilde{N}_{k+1}} Q_i^{k+1}$) if $270^{\circ}$ of a small circular arc centered at the vertex lies inside $\bigcup_{i=1}^{\tilde{N}_{k}} Q_i^{k}$ (or $\bigcup_{i=1}^{\tilde{N}_{k+1}} Q_i^{k+1}$) for example $A_{-1}$ in Figure~\ref{fig:Fill2}. In the other case we call a vertex of $\partial \bigcup_{i=1}^{\tilde{N}_{k}} Q_i^{k}$ (or $\partial \bigcup_{i=1}^{\tilde{N}_{k+1}} Q_i^{k+1}$) an acute corner of $\partial \bigcup_{i=1}^{\tilde{N}_{k}} Q_i^{k}$ (or $\partial \bigcup_{i=1}^{\tilde{N}_{k+1}} Q_i^{k+1}$) if $90^{\circ}$ of a small circular arc centered at the vertex lies inside $\bigcup_{i=1}^{\tilde{N}_{k}} Q_i^{k}$ (or $\bigcup_{i=1}^{\tilde{N}_{k+1}} Q_i^{k+1}$) for example $C$ in Figure~\ref{fig:Fill2}. We describe an outer vertex as a line vertex if it is not a corner of $\partial \bigcup_{i=1}^{\tilde{N}_{k}} Q_i^{k}$ neither is it the vertex neighbouring a relfex corner of $\partial \bigcup_{i=1}^{\tilde{N}_{k}} Q_i^{k}$ (similarly for $\bigcup_{i=1}^{\tilde{N}_{k+1}} Q_i^{k+1}$) for example $X_i$ or $Y_i$ in Figure~\ref{fig:Fill2}.
		
		Firstly we deal with the area around corners of $\partial \Omega_k$. We add no new segments going from a reflex corner $A$. We add a segment from the vertices $A', A''$ neighbouring $A$ to the nearby corner of $\partial\Omega_k$, this creates a quadrilateral (see Figure~\ref{fig:Fill2}). For an acute corner $C$ with adjacent sides $S_1$ and ${S}_2$ (whose corresponding opposite sides are $\tilde{S}_1$ and $\tilde{S}_2$) we create new quadrilaterals by adding a segment from $C$ to both of the second neighbours of $A$.
		
		Now choose any side of $\partial \Omega_k$ and consider the two corresponding sides $S, \tilde{S}$, with $S$ a side of $\partial \bigcup_{i=1}^{\tilde{N}_k} Q_i^{k}$ and $\tilde{S}$ a side of $\partial \bigcup_{i=1}^{\tilde{N}_{k+1}} Q_i^{k+1}$. Both $S$ and $\tilde{S}$ have endpoints very close to the same corners of $\partial\Omega_k$, see \eqref{Sandwich}. This means that in the extreme scenario the distance between the length of $S$ and $\tilde{S}$ is at less than $8\cdot 2^{-m_{k+1}}$. Because the side length of the squares is $2\cdot 2^{-m_{k+1}}$ the difference in number of line vertices on $S$ and $\tilde{S}$ varies by at most 4, at most 2 at each corner. Thus it is possible to pair the line vertices of $S$ and the line vertices of $\tilde{S}$ so that at most two line vertices of $S$ (resp. $\tilde{S}$) close to a given corner of $S$, (resp. $\tilde{S}$) do not have a partner in $\tilde{S}$, (resp $S$) moreover the segment between a pair of partnered vertexes is nearly perpendicular to the corresponding segment of $\partial \Omega_k$. Let $X_1, X_2$ be a pair of line vertices on $S$ with partners $Y_1$ and $Y_2$, their partners in $\tilde{S}$. Each of the segments $X_1Y_1$ and $X_2Y_2$ intersect $\partial\Omega_k$ exactly once, call the points $Z_1$ and $Z_2$ respectively. Any of the quadrilaterals $X_1Z_1Z_2X_2$ and  $Y_1Z_1Z_2Y_2$ formed by any two pairs of neighbouring line vertexes are uniformly bi-Lipschitz equivalent with $Q(0,2^{-m_{k+1}})$ by a 2-piece-wise affine map. Permitting a small bastardisation of the notation we call these quadrilaterals $Q_i^k$ for $\tilde{N}_k+1\leq i\leq \tilde{\tilde{N}}_k$.
		
		If we have a left over line vertex $X$ with no partner we create a new quadrilateral with a vertex at $X$ by adding a segment from $X$ to the point on $\partial \Omega_k$ half way between the neighbouring vertices on $\partial \Omega_k$ (see $Y_0$ in Figure~\ref{fig:Fill2}). We call the entire collection of these quadrilaterals $Q_i^k$, $1\leq i\leq N_k$. Up to maybe increase $L$ the equation \eqref{AlmostFiniteOverlaps1} is extended as
		\begin{equation}\label{AlmostFiniteOverlaps}
		\sum_{i=1}^{{N}_k}\chi_{2Q_i^k}(x)\leq L
		\end{equation}
		
		\step{3}{Move the corners}{C}
		It is possible by moving the corners of the quadrilaterals of  $Q_i^k$ $K_k\leq i \leq N_k$ to get quadrilaterals $\Q_i^k$ which satisfy the estimate \eqref{1507}. The idea is exactly that of the proof of Lemma~\ref{GridLock}, the only difference here is that neighbouring squares may not have exactly the same side length, but the ratio is bounded by 2. This case is dealt with in detail in \cite[Theorem 4.1, step 2]{C} and the interested reader can check the details there. We move corners by at most $2^{-2-m_k-j}$, for $Q_i^k$ in the $j$-th generation of added squares.
		
		\step{4}{Define $g$ on the grid and extend}{D}
		We define $g$ on $\partial \Q_i^k$ for $K_k+1 \leq i \leq N_k$ exactly the same way as we defined $h$ in Lemma~\ref{PWLF} for $1\leq i\leq K_k+1$. We get a piece-wise linear and injective function $g$ on $\bigcup_{i=K_k+1}^{N_k} \partial\Q_i^k$. Because each $\Q_i^k$ is uniformly bi-Lipschitz equivalent with a square by (uniformly bounded number of pieces) piece-wise affine maps we can apply Corollary~\ref{RePara} and get a finitely piece-wise affine homeomorphism $g$ on $S_k$, extending the original mapping and satisfying
		$$
		\|Dg\|_{L^{\infty}(\Q_i^k)} \leq \frac{C}{r} \int_{\partial\Q_i^k}|D_{\tau}f|\dx \H^1.
		$$
		Further by applying the \eqref{1507} type estimate we got from step 3, we get
		\begin{equation}\label{Filip}
		\|Dg\|_{L^{\infty}(\Q_i^k)} < C\oint_{2Q_i^k}|Df| \dx \mathcal{L}^2
		\end{equation}
		on each $Q_{i}^k$, for $K_k+1 \leq i \leq N_k$.
		
		Let $K_k+1 \leq i \leq N_k$, let $y \in \Q^k_i$ and let $r>0$. For the simplicity of notation call
		$$
			A_{k,y,r} = \{l\in \en:K_k+1 \leq l \leq N_k;\mathcal{L}^2(\Q_l^k \cap Q(y,r)) >0\}\,,
		$$
		then it is obvious that
		\begin{equation}\label{Estove}
			\oint_{Q(y,r)}|Dg \chi_{\bigcup_{i=K_k+1}^{N_k}\Q_i^k}| \dx \mathcal{L}^2 \leq \max_{l \in A_{k,y,r}} \| Dg\|_{L^\infty(\Q_l^k)}.
		\end{equation}
		But in step~\ref{C} we moved corners by at most $2^{-2-m_k-j}$ for squares in the $j$-th generation. Therefore, for our $K_k+1\leq i \leq N_k$ (say $Q_i^k  =Q(a_i,2^{-m_k-j})$ is a $j$-th generation square) we have that $\Q_i^k \subset Q(a_i,\tfrac{5}{4}2^{-m_k-j})$ and this implies that for each $y\in \Q_i^k$ and $0<r<2^{-2-m_k-j}$ it holds that $Q(y,r) \subset 2Q_i^k$. Therefore, for any $l \in A_{k,y,r}$ we have that $\Q_l^k \cap 2Q_i^k \neq \emptyset$. On the other hand any `square' intersecting $2Q_i^k$ must be contained in $6Q_i^k$, because neighbours (and second neighbours) have side lengths bounded by a factor of 2. But then for $l \in A_{k,y,r}$
		\begin{equation}\label{contained}
			2Q_l^k\subset 12Q_i^k.
		\end{equation}
		Further, thanks to \eqref{BoundMe} we have that
		\begin{equation}\label{BoundYou}
			\mathcal{L}^2(Q_l^k) \geq C\mathcal{L}^2(Q_i^k),
		\end{equation}
		with $C$ independent of $i$, $j$, $k$ and $l$. We continue \eqref{Estove} using \eqref{Filip} and then \eqref{BoundYou} together with \eqref{contained} and get
		\begin{equation}\label{Part1}
		\begin{aligned}
			\oint_{Q(y,r)}|Dg \chi_{\bigcup_{l=K_k+1}^{N_k}\Q_l^k}| \dx \mathcal{L}^2 &\leq \max_{l \in A_{k,y,r}} \| Dg\|_{L^\infty(\Q_l^k)}\\
			& \leq  C\,\max_{l \in A_{k,y,r}} \oint_{2Q_l^k}|Df| \dx \mathcal{L}^2 \\
			&\leq C\oint_{12Q_i^k}|Df| \dx \mathcal{L}^2 \leq C M(|Df|)(y)
		\end{aligned}
		\end{equation}
		where $M$ denotes the maximal operator.
		
		Now we consider a $y\in \Q_i^k$ and $r>2^{-2-m_k-j}$, then, using \eqref{hope} and \eqref{1507} we have
		$$
		\begin{aligned}
			\int_{Q(y,r)}|Dg\chi_{\bigcup_{l=K_k+1}^{N_k}\Q_l^k}|\dx \mathcal{L}^2  &= \sum_{l=K_k+1}^{N_k}\int_{Q(y,r)\cap\Q_l^k}|Dg|\dx \mathcal{L}^2 \\
			&\leq C\sum_{l \in A_{k,y,r}}\int_{2Q_l^k}|Df|\dx \mathcal{L}^2.
		\end{aligned}
		$$
		We use $\eqref{contained}$, the fact that $r>2^{-2-m_k-j}$ to get that $Q(y,52r) \supset \bigcup_{l \in A_{k,y,r}}2Q_l^k$. Then in the previous estimate, using \eqref{AlmostFiniteOverlaps}, we get
		$$
		\sum_{l \in A_{k,y,r}}\int_{2Q_l^k}|Df|
		\leq C L \int_{Q(y,52r)}|Df\chi_{\bigcup_{i=K_k+1}^{N_k}\Q_i^k}|
		$$
		for $r>2^{-2-m_k-j}$. Therefore
		\begin{equation}\label{Part2}
		\begin{aligned}
		\oint_{Q(y,r)}|Dg\chi_{\bigcup_{i=K_k+1}^{N_k}\Q_i^k}| \dx \mathcal{L}^2 
		&\leq C\oint_{Q(y,52r)}|Df\chi_{\bigcup_{i=K_k+1}^{N_k}\Q_i^k}|\dx \mathcal{L}^2 \\
		&\leq C M(|Df|\chi_{\bigcup_{i=K_k+1}^{N_k}\Q_i^k})(y).
		\end{aligned}
		\end{equation}
		By taking supremum over all $0 < r < \infty$ in \eqref{Part1} and \eqref{Part2} on the left-hand side we obtain that
		\begin{equation}\label{SomeStuff}
		M(|Dg|\chi_{\bigcup_{i=K_k+1}^{N_k}\Q_i^k})(x)\leq C M(|Df|\chi_{\bigcup_{i=K_k+1}^{N_k}\Q_i^k})(x).
		\end{equation}%
		By the norm comparison \eqref{norm-comparison} we obtain that 
		$$
		\left\|Dg\chi_{\bigcup_{i=K_k+1}^{N_k}\Q_i^k}\right\|_{X}\leq C	\left\| Df\chi_{\bigcup_{i=K_k+1}^{N_k}\Q_i^k}\right\|_{X}
		$$
		which is \eqref{Halelujah}.	
	\end{proof}
	
	\section{Proof of Theorem~\ref{main}\label{sect5}}
	
	We now prove Theorem~\ref{main}.
	
	\step {1}{A grid of `squares'}{AB}
	
	Let $\epsilon^* >0$ be any fixed positive number, we want to find a piece-wise affine homeomorphic approximation $\hat{f}$ such that
	$$
	\|Df-D\hat{f}\|_{X(\Omega)}< C\epsilon^*
	$$
	and
	$$
	\|f-\hat{f}\|_{L^{\infty}(\Omega)} < \epsilon^*.
	$$
	The first stage of this is to separate $\Omega$ into nested sets $\Omega_k$. This separation is the subject of Lemma~\ref{Segregation}. We want to apply Lemma~\ref{Segregation} and get a grid of squares $Q_i^k$ but in order to do this we need to choose $\epsilon_k, \delta_k>0$ and a set $A_k$. Using the `$\epsilon$-$\delta$' continuity of the norm of $X$ (recall that all spaces supporting the Lebesgue Point Property have this property) we choose $\delta_k$ be a number so small that
	\begin{equation}\label{ChooseDelta}
	\|Df\chi_{E}\|_{X(\Omega)}<2^{-k}\epsilon^*
	\text{ for any } E\subset \Omega_k \text{ such that }\mathcal{L}^2(E)< 5\delta_k.
	\end{equation}
	We find a number $T_k\geq 1$ such that
	\begin{equation}\label{StartWithT}
	\begin{aligned}
	\mathcal{L}^2\Big(\big\{x:|Df(x)| > T_k \big\}\Big) &< \frac{\delta_k}{96},\\
	\mathcal{L}^2\Big(\big\{x:0<|Df(x)| <T_k^{-1} \big\}\Big) &< \frac{\delta_k}{96}  \text{ and } \\
	\mathcal{L}^2\Big(\big\{x:0<|J_f(x)| < 4 T_k^{-1} \big\}\Big) &< \frac{\delta_k}{96}.\\
	\end{aligned}
	\end{equation}
	Then we call $A_k$ the union of these sets, i.e.
	$$
	A_k = \{x:|Df(x)| > T_k \big\}\cup\{x:0<|Df(x)| <T_k^{-1} \big\}\cup\{x:0<|J_f(x)| < 4 T_k^{-1} \big\}
	$$
	and $\mathcal{L}^2(A_k) < \tfrac{\delta_k}{32}$. Now we apply Lemma~\ref{Rozumny} with $G = \Omega_k \setminus \Omega_{k-1}$, $M = (2+C_{\eqref{hope}})T_k$, $\tilde{\epsilon}_k = \frac{2^{-k}\epsilon^*}{\mathcal{L}^2(\Omega_k\setminus \Omega_{k-1})}$. This gives us a number $\tilde{\delta}_k$. We require
	\begin{equation}\label{ChooseEpsilon}
	\begin{aligned}
	\epsilon_k&< \frac{\tilde{\delta}_k}{1+ C_{\eqref{hope}}+4T_k} \text{ and} \\
	\epsilon_k &< \frac{2^{-4-k}\epsilon^*}{C_{\eqref{1507}} C_{\eqref{hope}}+\varphi_X(\mathcal{L}^2(\Omega_k\setminus \Omega_{k-1}))+ (1+C_{\eqref{hope}}+4T_k^2)\mathcal{L}^2(\Omega_k\setminus \Omega_{k-1})}, 
	\end{aligned}
	\end{equation}
	where $\varphi_X(\cdot)$ is the fundamental function of $X$. In each $\Omega_k\setminus\Omega_{k-1}$, Lemma~\ref{Segregation} gives a grid of squares $Q_i^k$ which cover most (up to a set of measure $\delta_k$) of $\Omega_{k}\setminus \Omega_{k-1}$. Now we focuss on calculations for a fixed $k$. Finally at the end of the proof we sum over $k$.
	
	We apply Lemma~\ref{GridLock} to slightly alter the squares $Q_i^k$ and call the resulting quadrilaterals $\Q_i^k$. We deal with the set $S_k$ (the set from Lemma~\ref{Click}) later. Recall $\mathcal{L}^2(S_k)<\delta_k$.

	By Lemma~\ref{Segregation} point $vi)$ we get that if $i \notin B_k$ then also $a_i \notin A_k$. Then for all $i\notin B_k$
	$$
	(Df(a_i) = 0 \text{ or } T^{-1}_k< |Df(a_i)|< T_k) \quad \text{and} \quad (J_f(a_i) = 0 \text{ or } J_f(a_i) > 4 T_k^{-1}).
	$$ 
	On the other hand by Lemma~\ref{Segregation} point $vi)$
	\begin{equation}\label{MinnieMouse}
	\mathcal{L}^2(\bigcup_{i\in B_k}\Q_{i}^k) \leq \sum_{i\in B_k}\mathcal{L}^2(\Q_{i}^k) \leq \sum_{i\in B_k}\mathcal{L}^2(2 Q_{i}^k) \leq 4 \mathcal{L}^2(\bigcup_{i\in B_k}Q_{i}^k) \leq 4\delta_k
	\end{equation}
	because $\Q_i^k$ and $Q_{i}^k$ are pair-wise essentially disjoint.
	
	On all the quadrilaterals $\Q_i^k$ we have the estimates \eqref{1507} and if $i\notin B_k$ also the estimate \eqref{15071}. We make the following categorisation of the quadrilaterals $\Q_i^k$. We put
	$$
	\begin{aligned}
	\G_k &= \big\{\Q_i^k; 1\leq i\leq K_k, i\notin B_k, T_k^{-1}<|Df(a_i)|<T_k, J_f(a_i) >4T_k^{-1} \big\}\\
	\N_k&=\big\{\Q_i^k; 1\leq i\leq K_k, i\notin B_k, T_k^{-1}<|Df(a_i)|<T_k, J_f(a_i) = 0 \big\}\\
	\Z_k&=\big\{\Q_i^k; 1\leq i\leq K_k, i\notin B_k, |Df(a_i)|=0 \big\}\\
	\B_k &= \big\{\Q_i^k; 1\leq i\leq K_k, i\in B_k\big\}.
	\end{aligned}
	$$
	Then every quadrilateral $\Q_i^k$, $1\leq i\leq K_k$ belongs exactly to one of $\G_k$, $\N_k$, $\Z_k$ or $\B_k$. As calculated in \eqref{MinnieMouse}
	\begin{equation}\label{PrettyGood}
	\mathcal{L}^2\Big(\bigcup_{\Q\in\B_k}\Q\Big) < 4\delta_k.
	\end{equation}

	Strictly speaking to define $\hat{f}$ on $\Omega_k \setminus \Omega_{k-1}$ we need to use Lemma~\ref{Click} on both $S_k$ (to define $\hat{f}$ on $S_k \cap \Omega_k$) and on $S_{k-1}$ (to define $\hat{f}$ on $S_{k-1} \setminus \Omega_{k-1}$). To make the following easier to read we redefine each $\Omega_k$ as $\Omega_k \cup S_k$ and then to define $\hat{f}$ on $\Omega_k\setminus \Omega_{k-1}$ we only have to apply Lemma~\ref{Click} on $S_k$. 
	
	By the choice of the grid (i.e. all the squares in the grid in $\Omega_k \setminus \Omega_{k-1}$ have the same size) then each square in $\Omega_k \setminus \Omega_{k-1}$ has at most $8$ neighbours (where a neighbour is a square that shares at least one vertex). The exception is when the square is at the edge of our grid in $\Omega_k\setminus \Omega_{k-1}$ because these squares may have neighbours of half their side length but the number of neighbours is still bounded by $12$. Because, for any $x \in (Q_i^k)^{\circ}$ and any $Q_j^k \neq Q_i^k$, the only way for $x \in 2Q_j^k$ is if $Q_j^k$ is a neighbour of $Q_i^k$. Therefore
	\begin{equation}\label{FiniteOverlaps}
	\sum_{i=1}^{K_k}\chi_{2Q_i^k}(x)\leq 13
	\end{equation}
	for almost every $x \in \bigcup_i Q_i^k$.
	
	\step{2}{Defining a piece-wise linear map on each $\partial \Q_{i}^{ k}$}{AC}
	We define an injective piece-wise linear function $\hat{f}$ using Lemma~\ref{PWLF} for each $\Q_i^k$, $1\leq i \leq K_k$ we put $\hat{f}(x) = f(x)$ at each vertex $x$ of $\Q_i^k$. Especially we note that $\hat{f}$ is linear on each side of each $\Q_i^k \in \G_k$ by point $v)$ of Lemma~\ref{PWLF}.
	
	\step{3}{Defining $\hat{f}$, the piece-wise affine approximation of $f$}{AD}
	In each case $\Q \in \G_k$, $\Q \in \N_k$, $\Q \in \Z_k$ or $\Q \in \B_k$ we define an injective piece-wise affine extension of $\hat{f}$ from $\partial\Q$. The quadrilateral $\Q$ is the union of 2 triangles (divided by the SW-NE diagonal) and, as was proved in Lemma~\ref{PWLF} point $v)$, for $\Q\in \G_k$ we define $\hat{f}$ as linear on each side of each of these triangles and this definition is injective. In that case $\hat{f}$ extends as an affine map onto each triangle.
	
	If $\Q \in \B_k$ or if $\Q \in \Z_k$ then we apply a 2-piece-wise affine 2-bi-Lipschitz mapping $\Psi$, which maps $\Q$ onto $Q(0,2^{-m_k})$ and there we use the extension Theorem~\ref{extension}, which gives us $g$. We define $\hat{f} = g\circ\Psi$ on $\Q$. On $\Q\in \N_k$ we define $\hat{f}$ using Theorem~\ref{extension2}.
	
	Notice that in each case we have defined $\hat{f}$ (see the use of Lemma~\ref{PWLF} in step~3) on each side of $\partial{\Q}$  so that
	\begin{equation}
	|D_{\tau}\hat{f}| \leq C|Df(a_i)| \text{ is constant on each side of } \partial\Q.
	\end{equation}
	The last definition that needs to be made in $\Omega_k \setminus \Omega_{k-1}$ is the application of Lemma~\ref{Click} to get $\hat{f}$ on $S_k$.
	
	\step{4}{Uniform convergence estimates}{AAA}
	It suffices to combine the estimates from Lemma~\ref{Segregation} point $vii)$ and point $i)$ from Lemma~\ref{PWLF} to get that $\|\hat{f}-f\|_{L^{\infty}(\bigcup_i Q_i^k)}< 5\epsilon_k$. The `squares' $\Q_i^k$, $i=K_k+1, \dots, N_k$ are even smaller than the squares $\Q_i^k$ for $i=1 \dots K_k$ and so we may assume that the oscillation there has the same bound.

	\step{5}{Estimating the distance of $D\hat{f}$ from $Df$ in $X$}{AE}
	
	In the following we refer to the `centre' of the quadrilateral $\Q$ as $a_{\Q}$. That is if $\Q = \Q_{i}^k$ then $a_{\Q} = a_i^k$ the centre of $Q^k_i$. We estimate
	\begin{equation}\label{AllTheTerms}
	\begin{aligned}
	\|(Df - &D\hat{f}) \chi_{\Omega_k \setminus \Omega_{k-1}}\|_{X(\Omega)}
	\leq \| (Df - D\hat{f})(\sum_{i=1}^{K_k}\chi_{\Q_i^k} + \chi_{S_k})\|_{X(\Omega)}\\
	&\leq \|(\sum_{\Q \in \B_k} \chi_{\Q} + \chi_{S_k}) Df\|_{X(\Omega)} + \|\sum_{\Q \in \B_k} \chi_{\Q}  D\hat{f}\|_{X(\Omega)}\\
	&\quad +\| \chi_{S_k} D\hat{f}\|_{X(\Omega)}\\
	& \quad + \|\sum_{\Q \in \G_k} \chi_{\Q}(  Df - Df(a_{\Q}))\|_{X(\Omega)} +  \|\sum_{\Q \in \G_k} \chi_{\Q} ( D\hat{f} - Df(a_{\Q}))\|_{X(\Omega)}\\
	& \quad + \|\sum_{\Q \in \N_k} \chi_{\Q}  (Df - Df(a_{\Q}))\|_{X(\Omega)} +  \|\sum_{\Q \in \N_k} \chi_{\Q} ( D\hat{f} - Df(a_{\Q}))\|_{X(\Omega)}\\
	& \quad + \|\sum_{\Q \in \Z_k} \chi_{\Q}  Df\|_{X(\Omega)} +  \|\sum_{\Q \in \Z_k} \chi_{\Q}  D\hat{f}\|_{X(\Omega)}.
	\end{aligned}	
	\end{equation}
	By \eqref{ChooseDelta}, \eqref{PrettyGood} and Lemma~\ref{Segregation} point $ii)$ ($\mathcal{L}^2(S_k)<\delta_k$) we have that 
	\begin{equation}\label{FBad}
	\|(\sum_{\Q \in \B_k} \chi_{\Q} + \chi_{S_k}) Df\|_{X(\Omega)} < 2^{-k}\epsilon^*.
	\end{equation}  
	
	The sum of the terms
	\begin{equation}\label{SticksAndStones}
	\begin{aligned}
	w_k : = {}& \|\sum_{\Q \in \G_k} \chi_{\Q}(  Df - Df(a_{\Q}))\|_{X(\Omega)} +  \|\sum_{\Q \in \Z_k} \chi_{\Q}  Df\|_{X(\Omega)}\\
	& + \|\sum_{\Q \in \N_k} \chi_{\Q}  (Df - Df(a_{\Q}))\|_{X(\Omega)} 
	\end{aligned}
	\end{equation}
	is immediately estimated by Lemma~\ref{Segregation} $vi)$, (that is \eqref{ThisOne}),  the finite overlap property \eqref{FiniteOverlaps} and \eqref{ChooseEpsilon} as follows
	\begin{equation}\label{BreakMyBones}
	\begin{aligned}
	w_k &\leq \sum_{\Q \in \G_k\cup\N_k\cup\Z_k}\|\chi_{\Q}  (Df - Df(a_{\Q}))\|_{X(\Omega)}\\
	&\leq \sum_{\Q \in \G_k\cup\N_k\cup\Z_k}\epsilon_k\mathcal{L}^2(2Q_{i}^k)\\
	& \leq 13\epsilon_k \mathcal{L}^2(\Omega_k \setminus \Omega_{k-1})\\
	&<2^{-k}\epsilon^*.
	\end{aligned}
	\end{equation}
	
	Having estimated the `$f$'-terms in \eqref{AllTheTerms}, we proceed with the `$\hat{f}$'-terms. Because every square in $S_k$ has side length at most $2^{1-m_k}$ it holds that 
	$$
	\bigcup_{i=K_k+1}^{N_k} 2Q_i^k \subset S_k + Q(0,2^{-m_k}).
	$$ 
	By Lemma~\ref{Segregation} point $v)$ we have that $$
	\partial\Omega_k + Q(0,6\cdot 2^{-m_k}) \subset S_k \subset \partial\Omega_k + Q(0,8\cdot 2^{-m_k}).
	$$
	Therefore the tube around $\partial\Omega_k$ which has double the measure of $S_k$ is a superset of $\partial\Omega_k + Q(0,12\cdot 2^{-m_k})$. Therefore the inclusion 
	$$
	\bigcup_{i=K_k+1}^{N_k} 2Q_i^k \subset \partial\Omega_k + Q(0,9\cdot 2^{-m_k}) \subset \partial\Omega_k + Q(0,12\cdot 2^{-m_k})
	$$ 
	implies that
	$$
	\mathcal{L}^2\Big(\bigcup_{i=K_k+1}^{N_k} 2Q_i^k\Big) \leq \mathcal{L}^2\big(\partial\Omega_k + Q(0,9\cdot 2^{-m_k})\big) \leq 2\mathcal{L}^2(S_k) < 2\delta_k.
	$$
	Therefore defining $\hat{f} = g$ on $S_k$ by Lemma~\ref{Click} we get, using \eqref{Halelujah}, \eqref{ChooseDelta} and the previous estimate, the following
	\begin{equation}\label{HatS}
	\|D\hat{f}\chi_{S_k}\|_{X(\Omega)} \leq C \|Df\chi_{\bigcup_{i=K_k+1}^{N_k} 2Q_i^k}\|_{X(\Omega)} < 2^{-k} C \epsilon^*.
	\end{equation}

	Now we deal with $\| D\hat{f} \chi_{\bigcup_{\B_k}\Q_i^k}\|_{X(\Omega)}$. For all $\Q_i^k$ it holds that $\Q_i^k \subset Q(a_i,\tfrac{5}{4}2^{-m_k})$ and therefore, for each $y \in \Q_i^k$ and each $0<r<2^{-2-m_k}$ we get $Q(y,r) \subset 2Q_i^k$. Recall that each $\Q_l^k$ satisfies $\tfrac{5}{4}Q_l^k \supset \Q_l^k \supset \tfrac{3}{4}Q_l^k$ and that $2Q_i^k$ is the square intersecting the centres of all its neighbours $Q_l^k$. Therefore any `square' $\Q_l^k$ intersecting $2Q_i^k$ must be a neighbour of $\Q_i^k$ and further, because $Q_i^k$ and all $Q_l^k$ neighbouring squares have the same side length i.e. $|a_i - a_l| = \diam_{\infty}Q_i^k = \diam_{\infty}Q_l^k$ we have
	\begin{equation}\label{contained2}
		2\Q_l^k \subset \tfrac{5}{2}Q_l^k \subset (\tfrac{5}{2} + |a_i - a_l|)Q_i^k \subset 5Q_i^k.
	\end{equation}
	Call $A_{k,y,r} = \{1 \leq l \leq N_k;\Q_l^k \cap Q(y,r) \neq \emptyset\}$, then similar to \eqref{Estove} we get
	\begin{equation}\label{Estove2}
	\oint_{Q(y,r)}|D\hat{f} | \dx \mathcal{L}^2 \leq \max_{l \in A_{k,y,r}}\oint_{\Q_l^k\cap Q(y,r)}|D\hat{f} | \dx \mathcal{L}^2 \leq \max_{l \in A_{k,y,r}} \| D\hat{f} \|_{L^\infty(\Q_l^k)}.
	\end{equation}
	The combination of \eqref{SpiderMan}, \eqref{SuperMan} and \eqref{1507} gives that for each $\Q_l^k \in \B_k$ and $\Q_l^k \in \Z_k$
	\begin{equation}\label{Nutter}
		\| D\hat{f} \|_{L^\infty(\Q_l^k)} \leq C \max_{l \in A_{k,y,r}} \oint_{2Q_l^k}|Df|\dx \mathcal{L}^2
	\end{equation}
	and the details are already in step~\ref{D} of the proof of Lemma~\ref{Click}. Considering \eqref{CoolAssEquation} \eqref{ThisOne} and \eqref{1507} we easily see that the above holds also for $\Q_l^k \in \N_k$. For $\Q_l^k \in \G_k$ it is immediate from \eqref{ThisOne}. We continue \eqref{Estove2} using \eqref{Nutter} and \eqref{contained2} to get
	\begin{equation}\label{Party1}
	\begin{aligned}
	\oint_{Q(y,r)}|D\hat{f}| \dx \mathcal{L}^2 &\leq \max_{l \in A_{k,y,r}} \| D\hat{f} \|_{L^\infty(\Q_l^k)}\\
	& \leq \max_{l \in A_{k,y,r}} C\,\oint_{2Q_l^k}|Df| \dx \mathcal{L}^2\\
	&\leq C\oint_{5Q_i^k}|Df| \dx \mathcal{L}^2 \leq C M(|Df|)(y).
	\end{aligned}
	\end{equation}

	Now we consider a $y\in \Q_i^k$ and $r>2^{-2-m_k}$. Using \eqref{Nutter} we have
	$$
	\begin{aligned}
	\int_{Q(y,r)}|D\hat{f}| \dx \mathcal{L}^2 &= \sum_{l\in A_{k,y,r}}\int_{Q(y,r)\cap\Q_l^k}|D\hat{f}| \dx \mathcal{L}^2\\
	&\leq \sum_{l \in A_{k,y,r}}\mathcal{L}^2(Q(y,r)\cap\Q_l^k)\|D\hat{f}\|_{\infty}\\
	&\leq C\sum_{l \in A_{k,y,r}}\mathcal{L}^2(Q(y,r)\cap\Q_l^k)\oint_{2Q_i^k}|Df| \dx \mathcal{L}^2\\
	&\leq C\sum_{l \in A_{k,y,r}}\int_{2Q_l^k}|Df| \dx \mathcal{L}^2.
	\end{aligned}
	$$
	Now we use the fact that $8r>2^{1-m_k}$ and $\Q_l^k \subset 2Q_l^k$ to see that if $l\in A_{k,y,r}$ then $Q(y,r)$ intersects $2Q_l^k$ and so $Q(y,9r) \supset 2Q_l^k \supset \Q_l^k$. Therefore, using \eqref{FiniteOverlaps} we get
	$$
	\sum_{l \in A_{k,y,r}}\int_{2Q_l^k}|Df| \dx \mathcal{L}^2
	\leq 13 \int_{Q(y,9r)}|Df| \dx \mathcal{L}^2
	$$
	for $r>2^{-2-m_k}$. Thus
	\begin{equation}\label{Party2}
	\begin{aligned}
	\oint_{Q(y,r)}|D\hat{f}| \dx \mathcal{L}^2
	&\leq C\oint_{Q(y,9r)}|Df| \dx \mathcal{L}^2\\
	&\leq C M(|Df|)(y).
	\end{aligned}
	\end{equation}
	By taking supremum over all $0 < r < \infty$ in \eqref{Party1} and \eqref{Party2} on the left-hand side we obtain that
	$$
	M(|D\hat{f}|)(x)\leq C M(|Df|)(x)
	$$
	for $x \in \Omega_k \setminus (\Omega_{k-1} \cup S_k)$.
	
	By the norm comparison \eqref{norm-comparison} and Lemma~\ref{Segregation}~$vi)$ and the $\varepsilon$-$\delta$ continuity of the norm we obtain that
	\begin{equation}\label{HatB}
	\left\|D\hat{f}\chi_{\bigcup_{\mathcal{B}_k}Q}\right\|_{X}\leq C	\left\| Df\chi_{\bigcup_{\mathcal{B}_k}Q}\right\|_{X}\leq C2^{-k}\epsilon^*.
	\end{equation}

	We estimate the term $\|\sum_{\Q \in \G_k} \chi_{\Q} ( D\hat{f} - Df(a_{\Q}))\|_{X(\Omega)}$ in \eqref{AllTheTerms} as follows. For each $\Q \in \G_k$ we have from Lemma~\ref{Segregation} point $iv)$ the estimate
	$$
	2^{1+m_k}\|f(x) - f(a_{\Q}) - Df(a_{\Q})(x-a_{\Q})\|_{\infty} < \epsilon_k
	$$
	and $\hat{f}(x) = f(x)$ at each $x$, vertex of $\Q$. Therefore on both of the triangles of $\Q$ we have $|D\hat{f} - Df(a_{\Q})| < 4\epsilon_k$. Therefore on all $\Q \in \G_k$ we have $\|D\hat{f} - Df(a_{\Q})\|_{L^{\infty}(\Q)} < 4\epsilon_k$. This means, by \eqref{ChooseEpsilon}, we can estimate
	\begin{equation}\label{HatG}
	\begin{aligned}
		\Big\|[D\hat{f} - Df(a_{\Q})]\sum_{\Q\in\G_k}\chi_{\Q}\Big\|_{X(\Omega)}
		&\leq \Big\|4\epsilon_k \sum_{\Q\in\G_k}\chi_{\Q}\Big\|_{X(\Omega)}\\
		& < 4\epsilon_k \varphi(\mathcal{L}^2(\Omega_k\setminus \Omega_{k-1}))\\
		&  < 2^{-k}\epsilon^*.
	\end{aligned}
	\end{equation}
	
	We estimate the term $\|\sum_{\Q \in \Z_k} \chi_{\Q}  D\hat{f}\|_{X(\Omega)}$ as follows. For each $\Q \in \Z_k$ we have from Corollary~\ref{RePara}, \eqref{15071} that
	$$
	\begin{aligned}
	\|D\hat{f} \|_{L^{\infty}(\Q)} 
	&< C_{\eqref{hope}} \oint_{\partial \Q} |D_{\tau} \hat{f} |\dx \mathcal{H}^1\\
	&< C_{\eqref{hope}}C_{\eqref{1507}}\epsilon_{k} \oint_{2Q} |D f |\dx \mathcal{L}^2 \\
	& <C_{\eqref{hope}}C_{\eqref{1507}}\epsilon_{k}^2 \\
	&< \epsilon_k. \\
	\end{aligned}
	$$
	Therefore using \eqref{ChooseEpsilon}
	\begin{equation}\label{HatZ}
	\|D\hat{f}\sum_{\Q\in\Z_k}\chi_{\Q}\|_{X(\Omega)}\leq \|\epsilon_k \sum_{\Q\in\Z_k}\chi_{\Q}\|_{X(\Omega)} < \epsilon_k\varphi_{X}(\mathcal{L}^2(\Omega_k\setminus \Omega_{k-1})) < 2^{-k}\epsilon^*.
	\end{equation}

	Now let us estimate the $\|\sum_{\Q \in \N_k} \chi_{\Q} ( D\hat{f} - Df(a_{\Q}))\|_{X(\Omega)}$ term. We have defined $\hat{f}$ using Theorem~\ref{extension2} on each $\Q \in \N_k$. The extension on each $\Q$ has two parts, the $W_{\Q}$ part and the $\Q \setminus W_{\Q}$ part. Now let us estimate
	$$
	\begin{aligned}
	\Big\|\sum_{\Q\in\N_k}(D\hat{f} -Df(&a_{\Q}))\chi_{\Q}\Big\|_{L^1(\bigcup_{\N_k}\Q)}\\
	&\leq \sum_{ \mathcal{Q} \in \N_k} \Big(\|D\hat{f}-Df(a_{\mathcal{Q}})\|_{L^1(W_{\Q})}+\|D\hat{f}-Df(a_{\mathcal{Q}})\|_{L^1(\mathcal{Q}\setminus W_{\Q})}\Big)\\
	&\leq \sum_{\Q\in\N_k}\Big( C\mathcal{L}^2(\Q)\varepsilon_k+\mathcal{L}^2(\Q\setminus W_{\Q})\big(\|D \hat{f}\|_{L^\infty(\mathcal{Q})}+ |Df(a_{\mathcal{Q}})|\big)  \Big)\\
	&\leq \sum_{ \Q\in\N_k} \Big((1+ C_{\eqref{hope}})\mathcal{L}^2(\Q)\varepsilon_k+\varepsilon_k\mathcal{L}^2(\mathcal{Q})\cdot4T_k\Big)\\
	&\leq (1+ C_{\eqref{hope}}+4T_k) \mathcal{L}^2(\Omega_k\setminus \Omega_{k-1})\epsilon_k\\
	&< \tilde{\delta}_k \mathcal{L}^2(\Omega_k \setminus \Omega_{k-1})
	\end{aligned}
	$$
	by \eqref{ChooseEpsilon}. Of course however we have that for all $\Q \in \N_k$ that $|Df(a_{\Q})|<T_k$ and by \eqref{CoolAssEquation} that $\|D\hat{f}\|_{L^{\infty}(\Q)} \leq (1+C_{\eqref{hope}})T_k$. Therefore
	\begin{equation}\label{LInftyBound}
	\Big\|\sum_{\Q\in\N_k}(D\hat{f} -Df(a_{\Q}))\chi_{\Q}\Big\|_{L^\infty(\bigcup_{\N_k}\Q)} \leq (2+C_{\eqref{hope}})T_k.
	\end{equation}
	But by the choice of $\tilde{\delta}_k$ from Lemma~\ref{Rozumny} (see paragraph just before \eqref{ChooseEpsilon}) we have that because $\Big\|\sum_{\Q\in\N_k}(D\hat{f} -Df(a_{\Q}))\chi_{\Q}\Big\|_{L^1(\bigcup_{\N_k}\Q)} < \tilde{\delta}_k \mathcal{L}^2(\Omega_k \setminus \Omega_{k-1})$ and \eqref{LInftyBound} that
	\begin{equation}\label{HatN}
	\Big\| \sum_{\Q\in\N_k}(D\hat{f} -Df(a_{\Q}))\chi_{\Q}\Big\|_{X(\Omega)} \leq \tilde{\epsilon}\mathcal{L}^2(\Omega_k \setminus \Omega_{k-1}) =  2^{-k}\epsilon^*.
	\end{equation}
	
	From \eqref{AllTheTerms} and summing \eqref{FBad}, \eqref{BreakMyBones} (considering \eqref{SticksAndStones}), \eqref{HatB}, \eqref{HatS}, \eqref{HatG}, \eqref{HatZ} and \eqref{HatN} we get that
	$$
	\|(Df-D\hat{f})\chi_{\Omega_k \setminus \Omega_{k-1}}\|_{X(\Omega)}< C2^{-k}\epsilon^*.
	$$
	Summing this over $k$ we get that
	$$
	\|Df-D\hat{f}\|_{X(\Omega)}\leq \sum_k \|(Df-D\hat{f})\chi_{\Omega_k \setminus \Omega_{k-1}}\|_{X(\Omega)}<C\epsilon^*\sum_k 2^{-k}  = C\epsilon^*.
	$$
	From step~\ref{AAA} and the fact that $\epsilon_k < 2^{-k}\epsilon^*$ (see \eqref{ChooseEpsilon}) we know that
	$$
	\|f-\hat{f}\|_{L^{\infty}(\Omega)}< \epsilon^*.
	$$
	
	\step {6}{Smoothing piece-wise affine maps}{AA}
	
	From the above we may assume that we have a $\hat{f}$ satisfying
	$$
	\|Df-D\hat{f} \|_{X(\Omega)}<\tfrac{\epsilon}{2}
	$$
	and
	$$
	\|f-\hat{f}\|_{L^{\infty}(\Omega)}<\tfrac{\epsilon}{2}.
	$$
	Now it suffices to apply Lemma~\ref{Approx} to prove Theorem~\ref{main}.

	\step{7}{Finite-triangulation}{OO}
	
	The finite-triangulation part of Theorem~\ref{main} follows immediately from the calculations above and \cite[Section 4.2]{C}.
	\qed
	
	\subsection*{Acknowledgements}
	The authors would like to thank to the anonymous referee for their pointed comments that helped us to improve the readability of the manuscript.

\end{document}